\newtheorem{theorem}{Theorem}[section]
\newtheorem{lemma}[theorem]{Lemma}
\newtheorem{definition}[theorem]{Definition}
\newtheorem{assumption}[theorem]{Assumption}
\newcommand{\PP}{\mathbb P}
\newcommand{\divv}{\text{div } }
\def\comma{ {\rm ,\qquad{}} }
\newcommand{\be}{\begin{equation}}
\newcommand{\ee}{\end{equation}}
\numberwithin{equation}{section}
\title[Stochastic Boussinesq equations]{Global existence for the stochastic Boussinesq equations with transport noise and small rough data}
\author{Quyuan Lin}
\address{Department of Mathematics, University of California, Santa Barbara, CA 93106}
\email{quyuan\_lin@ucsb.edu}
\author{Rongchang Liu}
\address{Department of Mathematics, University of Arizona, Tucson, AZ 85721}
\email{lrc666@math.arizona.edu}
\author{Weinan Wang}
\address{Department of Mathematics, University of Arizona, Tucson, AZ 85721}
\email{weinanwang@math.arizona.edu}
\date{\today}
\begin{document}
\begin{abstract}
    In this paper, we consider the stochastic Boussinesq equations on $\mathbb T^3$ with transport noise and rough initial data. We first prove the existence and uniqueness of the local pathwise solution with initial data in $L^p(\Omega;L^p)$ for $p>5$. By assuming additional smallness on the initial data and the noise, we establish the global existence of the pathwise solution. 
\end{abstract}

\maketitle

MSC Subject Classifications: 35Q86, 60H15, 76M35, 35Q35\\

Keywords: Stochastic Boussinesq equations, transport noise, rough initial data, well-posedness, pathwise solution

\section{Introduction}
We are interested in the stochastic Boussinesq equations on a 3D torus $\mathbb T^3$ 
\begin{align}\label{e.w09241}
    \begin{split}
    du&=(\Delta u-\mathcal P(u\cdot \nabla u -\rho e_{3}))dt+\mathcal P(b\cdot\nabla u + \sigma^{(1)}(u,\rho)) d\mathbb{W}_t, \quad \nabla\cdot u =0, \\
    d\rho&=(\Delta \rho -u\cdot \nabla\rho )dt + (b\cdot\nabla \rho + \sigma^{(2)}(u,\rho))d\mathbb{W}_t.
    \end{split}
\end{align}
Here $u=u(x, t)$ is the velocity vector field, $\rho = \rho(x, t)$ is the scalar temperature or density of fluid, $\mathcal P$ is the Leray projection (see \eqref{leray}), and $\mathbb{W}_t$ is a cylindrical Wiener process valued in some separable Hilbert space and the corresponding stochastic integral is in the Itô sense. The term $(\mathcal P(b\cdot\nabla u), b\cdot\nabla\rho)d\mathbb{W}_t$ represents the transport noise and $(\mathcal P(\sigma^{(1)}(u,\rho)),\sigma^{(2)}(u,\rho))d\mathbb{W}_t$ is the multiplicative noise. The main result of the paper is the global well-posedness of the system \eqref{e.w09241} in $L^p(\Omega;L^p(\mathbb T^3))$ with $p>5$ and small initial data.

The deterministic Boussinesq system describes the evolution of the velocity field $u$ of an incompressible fluid under the buoyancy $\rho e_3$. Recently, there has been a lot of progress made on the existence, persistence of regularity, and long time behavior of solutions, mostly in the case of positive viscosity. Mathematically, the 2D Boussinesq system
is closely related to the 3D incompressible Euler and Navier-Stokes equations since they share a similar vortex stretching effect. In the case of zero viscosity and diffusivity, the 2D Boussinesq equations can be used as a proxy for the 3D axis-symmetric Euler
equation with swirl away from the symmetric axis. For more recent well-posedness and long time behavior results for the deterministic Boussinesq equations with fractional or full dissipation, cf. \cite{ BH,CN, HL, KW1,LLT}. See \cite{KW2, W1} when boundary conditions are imposed.

Over the past few decades, there has been a growing interest in investigating the impact of stochastic effects on fluid models. By introducing white noise terms into the system and assuming random initial data, these models can account for both numerical and empirical uncertainties. This approach can yield predictions that not only reflect a realistic trajectory but also provide insight into associated uncertainties. Along this line of research, Bensoussan and Temam started the study on the stochastic Navier-Stokes equations \cite{BT}. Mikulevicius and Rozovskii \cite{MR} addressed global $L^2$ well-posedness. See also \cite{ZZ} for local well-posednes for the 3D stochastic Navier-Stokes equations. More recently, Kukavica, Xu, and Ziane considered the stochastic Navier-Stokes system with multiplicative noise and rough initial data \cite{KXZ}. For the stochastic Boussinesq equations, Du addressed local well-posedness for the 3D Boussinesq system with Sobolev initial data \cite{D}. Duan and Millet \cite{DM} studied large deviation principle of the stochastic Boussinesq system. Pu and Guo \cite{PG} proved global well-posedness for the 2D Boussinesq equations with additive white noise while F\"{o}ldes et al \cite{FGRT} considered ergodic and mixing properties of the {B}oussinesq equations. In \cite{WY}, together with Yue, the third author of this paper proved almost-sure global existence of weak solutions to the Boussinesq equations using random data approach. Luo \cite{L} recently considered 2D Boussinesq equations with transport noise and Alonso-Or\'{a}n and Bethencourt de Le\'{o}n proved global well-posedness with transport noise and Sobolev initial data \cite{AB}. See also \cite{HZSG} for the stochastic fractional Boussinesq equations. For the recent progress for other stochastic fluid euations, see, for example, \cite{brzezniak2001stochastic,capinski1999stochastic,glatt2014local} for the stochastic Euler equations and \cite{brzezniak2021well,debussche2011local,debussche2012global,hu2022local,hu2023pathwise} for the stochastic primitive equations.

To the best of our knowledge, this work seems to be the first one to consider the global well-posedness for the 3D Boussinesq equations with transport noise and rough data. The transport noise represent a wider and more general class of noise than the multiplicative noise.
In the study of turbulent flows, the transport noise has been introduced in \cite{kraichnan1968small,kraichnan1994anomalous} and widely studied in many different SPDEs, see for example \cite{agresti2022stochastic-1,agresti2022stochastic-2,mikulevicius2004stochastic,MR,flandoli2008introduction,agresti2021stochastic-2,brzezniak1992stochastic} and refereces therein. It is worthwhile to mention that with transport noise we are able to prove the existence and uniqueness of solutions when $(u_0,\rho_0)\in L^p(\Omega;L^p(\mathbb T^3))$ with $p>5$. The extension to $p>3$ (see \cite{kukavica2023local} for Navier-Stokes equations with multiplicative noise) remains open when the transport noise is present. Our result is not covered by \cite{agresti2021stochastic-2}, which investigates the Navier-Stokes equations in Besov spaces, as $L^p$ spaces and Besov spaces are not equivalent.

Compared to previous results where people consider either multiplicative noise or smoother initial data, one of the main difficulties in this work is on the rough $L^p$ analysis of transport noise under Leray projection on the velocity field. Indeed, in the $L^p$ scenario, the regularity of the dissipation is at a scale weaker than the $W^{1, p}$. Therefore, one cannot control the energy of the transport noise through a $W^{1, p}$ estimate. In the case when there is no Leray projection, by assuming smallness on the noise intensity, one can obtain an energy estimate of the transport noise that is comparable with (and hence compensated by) the dissipation. However, this approach is not directly applicable in the presence of Leray projection due to the non-local feature of the projection. Our key observation is that the energy of the orthogonal part of the Leray projection on the transport noise $\nabla\Delta^{-1}\divv(b\cdot \nabla u)d\mathbb{W}_t$,
when combined with the divergence free property of the velocity $u$, can be controlled by $L^p$ energy of the velocity through standard elliptic regularity estimates. This enables us to obtain a desired energy estimate for the Leray projection of the transport noise at a scale comparable with dissipation and $L^p$ energy. Another difficulty is to control the convection term during the fixed point iteration process. This is overcome by using a double cut-off trick introduced in \cite{KXZ}. The local pathwise solution is then obtained as the limit of solutions to the truncated systems by sending the truncating scale to infinity. Fixing the cut-off at a small scale, global existence is proved by analyzing the truncated system, whose solution agrees with the solution to \eqref{e.w09241} over all time horizon with high probability as long as the initial data is small. This involves an analysis of the transport noise mentioned earlier that requires its intensity to be sufficiently small.


 The rest of this paper is organized as follows. In Section \ref{NP}, we introduce basic notations and preliminaries. In Section \ref{s.L022301} we establish the $L^p$ estimates for the linear parabolic equation with transport noise under Leray projection. Such estimate is the key to show the local and global well-posedness for the stochastic Boussinesq equations. Section \ref{s.w02271} is devoted to the study of a truncated system from the Boussinesq equations, and after that by applying a family of suitable stopping times we are able to obtain the maximal pathwise solution to the original Boussinesq equations. Finally, we prove the global existence of pathwise solutions in Section \ref{sec:global}.

\section{Notations and preliminaries}\label{NP}

\subsection{Functional settings}
The universal constant $C$ appears in the paper may change from line to line. We shall use subscripts to indicate the dependence of $C$ on other parameters when necessary, {\it e.g.}, $C_r$ means that the constant $C$ depends only on $r$. 

Let $L^p$ and $W^{s,p}$ with $s\in \mathbb N$ and $p\geq 1$ be the usual Sobolev spaces (see \cite{adams2003sobolev}). Let $B_{p, q}^{s}$ with $q\geq1$ be  the usual Besov space (see \cite{bahouri2011fourier}). In the periodic setting, for $s\in \mathbb N$ the $n-$th Fourier coefficient of an $L^1$ function $f$ on $\mathbb T^3$ is defined as
\begin{equation}
    \mathcal Ff(n)=\hat{f}(n)=\int_{\mathbb T^3}f(x)e^{-2\pi in\cdot x}\,dx\comma n\in\mathbb Z^3.
\end{equation}
Denote the operator $J^s$ by
\begin{equation}
    J^{s}f(x)=\sum_{n \in \mathbb Z^3}(1+4\pi^2 |n|^2)^{s/2}
    \hat{f}(n)e^{2\pi i n\cdot x}
    \comma x\in \mathbb T^3\comma s\in \mathbb R.
\end{equation}
In the periodic setting, we have the following equivalent $W^{s,p}$ norm: 
\begin{align*}
    \frac1C \|J^s f\|_{p} \leq  \|f\|_{W^{s,p}} \leq C\|J^s f\|_{p} \comma s\geq 0\comma 1<p<\infty,
\end{align*}
where the usual $L^p$ norms are denoted by $\|\cdot \|_p$.

Next, we define the Leray projector
\begin{equation}\label{leray}
    \mathcal P u = u - \nabla \Delta^{-1} \nabla \cdot u.
\end{equation}
Here $\Delta^{-1}$ is defined subject to periodic boundary condition with zero mean. Denote by $\mathbf{P} = (\mathcal{P}, \mathrm{id})$.
We can rewrite system \eqref{e.w09241} in the following abstract form:
\begin{equation}\label{e.L10261}
    dU = (AU + B(U) + G(U))dt + \left(\mathbf{P}(b\cdot\nabla U) + \sigma(U)\right)d\mathbb{W},
\end{equation}
where $U = (u, \rho):=(U_1,U_2,U_3,U_4)$, $AU = (\Delta u, \Delta\rho)$, $B(U):=-\mathbf{P}(u\cdot \nabla U)$, $G(U) = \mathbf{P}(\rho e_3, 0) = \mathbf{P}(U_4 e_3, 0)$, $\sigma(U) = (\mathcal P \sigma^{(1)}(U), \sigma^{(2)}(U))$. Notice that when $\int U_0\,dx =0$, $\nabla\cdot b=0$, and $\sigma$ satisfies condition \eqref{e.Q031101} below, one can integrate system \eqref{e.L10261} in $\mathbb T^3$ to obtain that
$\int U\,dx =0$ for all $t\geq 0.$ For convenience, we denote by
\begin{align*}
    W_{sol}^{s,p} = \left\{ \mathbf{P}f : f\in W^{s,p}, \int_{\mathbb T^3} f\,dx = 0 \right\}.
\end{align*}

\subsection{Stochastic preliminaries}
We denote by $\mathcal{H}$ a real separable Hilbert space with a complete orthonormal basis $\{\mathbf{e}_k\}_{k\geq 1}$. $(\Omega, \mathcal{F},(\mathcal{F}_t)_{t\geq 0},\mathbb{P})$ represents a complete probability space with an augmented filtration $(\mathcal{F}_t)_{t\geq 0}$.  With $\{W_k: k\in\mathbb N\}$  a family of independent $\mathcal{F}_t$-adapted Brownian motions, $\mathbb W( t,\omega):=\sum_{k\geq 1} W_k( t,\omega) \mathbf{e}_k$ is an $\mathcal{F}_t$-adapted and $\mathcal{H}$-valued cylindrical Wiener process.

For a real separable Hilbert space $\mathcal{Y}$, we define $l^2( \mathcal{H},\mathcal{Y})$ to be the set of Hilbert-Schmidt operators from $\mathcal{H}$ to $\mathcal{Y}$ with the norm  defined by   
\begin{equation}    
\Vert G\Vert_{l^2( \mathcal{H},\mathcal{Y})}^2:= \sum_{k=1}^{\dim \mathcal{H}} | G \mathbf{e}_k|_{\mathcal{Y}}^2<\infty \comma G\in l^2( \mathcal{H},\mathcal{Y}).      
\end{equation} 
In this paper, we either regard \eqref{e.L10261} as a vector-valued equation or consider it component-wise. Correspondingly, $\mathcal{Y}={\mathbb R}$ or ${\mathbb R}^{d}$. Let $G=(G_1, \cdots, G_d)$ and $G\mathbf{e}_k:=(G_1\mathbf{e}_k, \cdots, G_d\mathbf{e}_k)$. Then $G\in l^2( \mathcal{H},{\mathbb R}^{d})$ if and only if $G_i\in l^2( \mathcal{H},{\mathbb R})$ for all $i\in\{1,\cdots, d \}$.

Next, we denote by   
\begin{equation}     
\mathbb{W}^{s,p}:=
\left\{f:\mathbb T^3\to l^2( \mathcal{H},\mathcal{Y}):              f\mathbf{e}_k\in W^{s,p}(\mathbb T^3) \text{~for each~}k, \text{~and~} \int_{\mathbb T^3} \Vert J^s f\Vert_{l^2( \mathcal{H},\mathcal{Y})}^p \,dx<\infty                       \right\},
\end{equation} 
 with respect to the norm    
 \begin{equation}    
 \Vert f\Vert_{\mathbb{W}^{s,p}}:=\left( \int_{\mathbb T^3} \Vert J^s f\Vert_{l^2( \mathcal{H},\mathcal{Y})}^p \,dx\right)^{1/p}.       \end{equation} 
 Furthermore, we denote $(J^s f) \mathbf{e}_k=J^s (f \mathbf{e}_k)$. In particular, $\mathbb{W}^{0,p}$ is abbreviated as $\mathbb{L}^{p}$. Letting $(\mathbf{P} f) \mathbf{e}_k=\mathbf{P}\left(f \mathbf{e}_k\right)$, we have $\mathbf{P} f \in \mathbb{W}^{s, p}$ if $f \in \mathbb{W}^{s, p}$. Write
$$
\mathbb{W}_{\text {sol }}^{s, p}=\left\{\mathbf{P} f: f \in \mathbb{W}^{s, p},  \int_{\mathbb T^3} f\, dx = 0\right\}.
$$
For each sufficiently regular $f: (t, x)\in\mathbb{R}_+\times\mathbb{T}^3\to\mathcal{H}\otimes\mathbb{R}^3$, denote
\[N_{f,k} = \|f\|_{L^{\infty}(\mathbb{R}_+;W^{k,\infty}(\mathbb{T}^3,l^2))}^2 = \sum_{n=1}^{\dim\mathcal{H}}\|f_n\|_{L^{\infty}_tW^{k,\infty}_x}^2.\]
Finally, the Burkholder-Davis-Gundy (BDG) inequality
\begin{equation}\label{e.L10263}    
\mathbb E \left[ \sup_{s\in[0,t]}\left| \int_0^s G \,d\mathbb W_r \right|_{\mathcal{Y}}^p\right]      \leq         C_{BDG} \mathbb E\left[ \left(\int_0^t \Vert G\Vert^2_{ l^2( \mathcal{H},\mathcal{Y})}\, dr \right)^{p/2}\right]      
\end{equation} 
holds for $p\in [1,\infty)$ and all $G\in l^2( \mathcal{H},\mathcal{Y})$ such that the right hand side is finite. 

\subsection{Assumptions on the noise}
\begin{assumption}\label{a.L022201}
We impose the following conditions to the noise coefficients $\sigma, b$:
\begin{enumerate}
\item General conditions on $\sigma, b$: 
\begin{enumerate}
\item \be\label{e.L10264}
\sum_{j=1}^4\left\|\sigma_j(U)\right\|_{\mathbb{L}^p} \leq C\left(\|U\|_{p}+1\right), 
\ee
\be\label{e.L121807}
\sum_{j=1}^4\left\|\sigma_j(U)-\sigma_j(V)\right\|_{\mathbb{L}^p} \leq C\|U-V\|_{p},
\ee
\be\label{e.Q031101}
\sigma\left(W_{\text {sol }}^{s, p}\right) \subset \mathbb{W}_{\text {sol }}^{s, p}.
\ee
\item For each $n$, $b_n(t,x):=b\mathbf{e}_n(t, x):\mathbb{R}_+\times\mathbb{T}^3\to\mathbb{R}^3$ is measurable and $\nabla\cdot b_n(t, \cdot)=0$ for every $t$. In addition $N_{b, 2}<\infty$ and the following super-parabolic condition is satisfied: 
there exists $\nu>0$ such that 
\begin{align*}
\sum_{i, j = 1}^3\left(\varepsilon_{ij} - \frac12\sum_{n=1}^{\dim\mathcal{H}}b_n^i(t, x)b_n^j(t, x)\right)\xi_i\xi_j\geq \nu\|\xi\|^2, \quad (t, x,\xi)\in \mathbb{R}_+\times \mathbb{T}^3\times \mathbb{R}^3, 
\end{align*}
where $\varepsilon_{ij}=1$ if $i=j$ and $0$ otherwise.  
\end{enumerate}
\item For local existence, we assume in addition: 
\begin{align}\label{e.L021403}
N_{b, 0} <\frac{p-1}{2(p-1)+p C_{B D G}^2}.
\end{align}
\item For global existence we assume additionally:
\begin{enumerate}
\item Superlinearity condition: 
\be\label{e.L022205}
\sum_{i=1}^4\left\|\sigma_i(U)\right\|_{\mathbb{L}^p} \leq \epsilon_0\|U\|_{p}, \text{ where } \epsilon_0 \text{ is small.}
\ee
\item Smallness of the transport noise: $N_{b, 2}$ is sufficiently small. 
\end{enumerate}
\end{enumerate}
\end{assumption}

\subsection{Definitions of solutions and main results}
The followings are the definitions for the local pathwise solution and maximal pathwise solution for the Boussinesq equations.

\begin{definition}[Local pathwise solution for the Boussinesq equations]\label{d.w03092}
Fix $T>0$. A pair $(U,\tau)$ is called a local pathwise solution to system \eqref{e.L10261} on $(\Omega, \mathcal{F},(\mathcal{F}_t)_{t\geq 0},\mathbb{P})$ if $\tau$ is a stopping time with $\PP(\tau>0)=1$ and if $U\in L^p(\Omega; C([0,\tau\wedge T], L^p)$ is progressively measurable and it satisfies   
\begin{align}    
\begin{split}      
(U(t),\phi)      &= (U_0,\phi)           +\int_0^t \left[ (U, A\phi)+  (G(U),\phi) + (\mathbf{P}(u\otimes U), \nabla \phi) \,\right]dr         
\\
&\hspace{1cm}+\int_0^t \left[\bigl(\sigma(U)),\phi\bigr) - (\mathbf{P}(b\otimes U), \nabla \phi) \right]dW(r)   ,    
\end{split}  
\end{align} 
$\PP\mbox{-a.s.} $ for all $\phi\in C^{\infty}(\mathbb T^3, \mathbb R^4)$. 
\end{definition}

\begin{definition}[Maximal pathwise solution for the Boussinesq equations]
     Fix $T>0$. A pair $(U, \tau)$ is a maximal pathwise $L^p$ solution to system \eqref{e.L10261} on $(\Omega, \mathcal{F},(\mathcal{F}_t)_{t\geq 0},\mathbb{P})$ if there exists an increasing sequence of stopping times $\tau_n$ with $\tau_n \uparrow \tau$ a.s. such that each pair $\left(U, \tau_n\right)$ is a local pathwise solution,
$$
\sup _{0 \leq t \leq \tau_n}\|U(t)\|_p^p+\sum\limits_{j=1}^4 \int_0^{\tau_n} \int_{\mathbb{T}^3}\left|\nabla\left(|U_j(t)|^{p / 2}\right)\right|^2 d x d t<\infty,
$$
and
$$
\sup _{0 \leq t \leq \tau}\|U(t)\|_p^p+\sum\limits_{j=1}^4 \int_0^{\tau_n} \int_{\mathbb{T}^3}\left|\nabla\left(|U_j(t)|^{p / 2}\right)\right|^2 d x d t=\infty,
$$
on the set $\{\tau \leq T\}$.
\end{definition}

The following is the main theorem concerning the local and global existence of pathwise solutions to the system \eqref{e.L10261}.
\begin{theorem}\label{t.w10101}
Assume that (1) and (2) in the Assumption \eqref{a.L022201} hold. Let $p>5$ and $U_0\in L^{p}(\Omega; L^p (\mathbb T^3))$ satisfying $\int_{\mathbb T^3} U \, dx=0$. Then there exists a unique maximal pathwise solution $(U, \tau)$ to \eqref{e.L10261} such that
\begin{equation}
\begin{split}
    &\mathbb E \left[\sup_{0\leq s\leq \tau}\|U(s)\|_{p}^{p}
    +
    \int_{0}^{\tau} \sum_{j=1}^{4}\int_{\mathbb T^3}|\nabla(|U_{j}(s,x)|^{p/2})|^2 \,dxds
    \right]
    \leq
    C_p\mathbb E[\|U_{0}\|_{p}^{p}+1],
    \end{split}
\end{equation}

If additionally (3) in the Assumption \eqref{a.L022201} holds, then for every $\epsilon\in(0,1]$ there is $\delta>0$ such that if $\mathbb{E}\|U_0\|_p^p\leq\delta$, then 
\begin{align}\label{L.022501}
\mathbb{P}(\tau=\infty)\geq 1-\epsilon.
\end{align}
\end{theorem}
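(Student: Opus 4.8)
The plan is to prove the two assertions in turn, basing the local theory on a truncated version of \eqref{e.L10261} and then, under the extra smallness hypotheses, showing that the truncation is almost never activated.

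\textbf{Local existence and uniqueness.} First I would regularize the quadratic terms by a double cut-off in the spirit of \cite{KXZ}: replacing $B(U)=-\mathbf P(u\cdot\nabla U)$ and $G(U)$ by $\theta_R(\|U\|_{X})\,\theta_R(\|U\|_{Y})\,B(U)$, where $\theta_R$ is a smooth bump supported in $[0,R]$ and $X,Y$ are two norms (one comparable to $L^p$, one slightly stronger) chosen so that the convection term becomes globally Lipschitz on the truncated set while still controlling $\|U\|_p$. With the drift now Lipschitz and the noise controlled, I would solve each truncated equation by a Banach fixed point argument in (a closed ball of) $L^p(\Omega;C([0,T];L^p))$ intersected with the parabolic energy space; the stochastic terms are estimated through the BDG inequality \eqref{e.L10263} together with the maximal $L^p$-regularity bound for the linear transport problem developed in Section \ref{s.L022301}. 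This gives a unique solution $U_R$ to the $R$-truncated system. To remove the truncation I would derive an energy estimate uniform in $R$: applying It\^o's formula to $\int_{\T^3}|U_{R,j}|^p\,dx$, the Laplacian yields a positive dissipation $c_p\int|\nabla(|U_{R,j}|^{p/2})|^2$, while the It\^o correction contributes $\tfrac{p(p-1)}2\sum_n\int|U_{R,j}|^{p-2}\bigl|(\mathbf P(b_n\cdot\nabla U)+\sigma_n)_j\bigr|^2$. The super-parabolic condition and the threshold \eqref{e.L021403} are exactly what let me absorb the transport contribution into the dissipation, while \eqref{e.L10264} handles the multiplicative part; taking suprema in time, applying BDG to the martingale, and Gr\"onwall yields the stated bound uniformly in $R$. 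Introducing the stopping times at which the cut-off first activates and letting $R\to\infty$ produces a local, hence maximal, pathwise solution, and uniqueness follows from the same $L^p$ energy computation applied to the difference of two solutions using the Lipschitz bound \eqref{e.L121807}.

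\textbf{Global existence.} For the second assertion I would not send $R\to\infty$ but instead fix the cut-off at a suitably small scale and analyze the single truncated system. The point is that under the superlinearity assumption \eqref{e.L022205} and with $N_{b,2}$ sufficiently small, the energy identity for $\mathbb E\|U_R(t)\|_p^p$ closes with a strictly negative leading coefficient: the dissipation now dominates the \emph{combined} transport-plus-multiplicative contribution precisely because $\epsilon_0$ and $N_{b,2}$ are small, so $\mathbb E\|U_R(t)\|_p^p$ stays bounded by (indeed decays relative to) $\mathbb E\|U_0\|_p^p$ for all $t$. Choosing $\delta$ small, a stopping-time and Markov-inequality argument then shows that the probability of the truncation ever activating before the solution norm reaches the small scale $R$ is at most $\epsilon$; on the complementary event $U_R$ solves the untruncated system \eqref{e.L10261} for all time, so $\tau=\infty$ there, which gives \eqref{L.022501}.

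\textbf{Main obstacle.} The decisive difficulty, in both parts, is the noise-versus-dissipation balance for the \emph{projected} transport term in the rough $L^p$ regime. Here the dissipation lives only at the level of $\nabla(|U_j|^{p/2})$, which is strictly weaker than $W^{1,p}$, so the transport noise cannot simply be bounded by a $W^{1,p}$ estimate; and because the Leray projection is nonlocal, the orthogonal component $\nabla\Delta^{-1}\divv(b\cdot\nabla u)$ resists a direct integration-by-parts. The resolution I would use is the observation flagged in the introduction: exploiting $\nabla\cdot u=0$ to rewrite $\divv(b\cdot\nabla u)$ and invoking elliptic (Calder\'on--Zygmund) regularity for $\nabla\Delta^{-1}\divv$ on $L^p$, the $L^p$ energy of this orthogonal piece is controlled by $\|U\|_p$ at a scale comparable with the dissipation, with a constant proportional to $N_{b,2}$. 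Making this quantitative enough both to close the local fixed point and to produce the negative leading coefficient required for the global bound is the heart of the argument.
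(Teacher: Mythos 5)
Your overall architecture matches the paper's: the decisive ingredient is the $L^p$ estimate for the linear transport problem under Leray projection (Section \ref{s.L022301}), proved via the decomposition $\mathbf{P}(b\cdot\nabla U)=b\cdot\nabla U-\mathbf{Q}(b\cdot\nabla U)$, the divergence-free rewriting of $\mathbf{Q}(b\cdot\nabla U)$, and elliptic regularity with constants controlled by $N_{b,1}, N_{b,2}$; local solutions come from a KXZ-type truncation solved by iteration against that linear estimate; and the global part fixes the cut-off at a small scale, proves an exponentially weighted energy bound under \eqref{e.L022205} and smallness of $N_{b,2}$, and concludes by Markov's inequality, exactly as in Section \ref{sec:global}. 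However, one step of your local argument is a genuine gap. You claim that It\^o's formula, absorption of the transport correction via \eqref{e.L021403}, BDG and Gr\"onwall yield an energy bound for the truncated system \emph{uniformly in $R$}, and you use this uniformity (together with stopping times) to pass to the maximal solution. This is false, and your energy sketch reveals why: it accounts for the dissipation, the transport It\^o correction and the multiplicative noise, but omits the truncated convection term $\varphi_R^2 B(U)$ entirely. On the support of the cut-off one only knows $\|U\|_p\lesssim R$, so this term can only be estimated as in \eqref{e.L022301}--\eqref{e.L030405} by $\delta\|\nabla(|U_j|^{p/2})\|_2^2+C_\delta R^{\kappa}\|U\|_p^p$, and the resulting Gr\"onwall factor grows like $e^{C R^{\kappa}T}$; correspondingly, the constant $C_T$ in Theorem \ref{t.02051} depends on the truncation level $\delta_0$. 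As a consistency check: if a uniform-in-$R$ bound did hold, Markov's inequality would show that the cut-off is never activated on $[0,T]$ with probability tending to one as $R\to\infty$, i.e.\ global existence in probability for \emph{arbitrary} data, which would make the smallness hypotheses in part (3) superfluous and is not expected for this supercritical system.

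The repair is the route the paper actually takes, and it needs no uniformity: for each fixed truncation level $n$ one obtains a \emph{globally-in-time} well-posed truncated system (with $n$-dependent constants), then defines $\tau_n$ as the first time $\|U^{(n)}\|_p$ reaches $n/2$, uses pathwise uniqueness to get consistency $U^{(m)}=U^{(n)}$ on $[0,\tau_m\wedge\tau_n]$, and sets $\tau=\lim_n\tau_n\wedge T$; the maximal solution is pieced together from these restrictions, not extracted from a limit controlled by uniform bounds. A second, more minor, deviation: your ``double cut-off'' $\theta_R(\|U\|_X)\theta_R(\|U\|_Y)$ with two different norms, one stronger than $L^p$, is not the device used here. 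The paper's two cut-offs are both in $L^p$ but sit at \emph{consecutive iterates}, $\varphi(\|U^{(n)}\|_p)\varphi(\|U^{(n-1)}\|_p)$, which is precisely what lets the difference estimates close through the Lipschitz property of $\varphi$; moreover, a cut-off in a norm strictly stronger than $L^p$ would not even be well defined on the solution space $L^p(\Omega;C([0,T];L^p))$.
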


\section{$L^p$ estimates for the linear equation}\label{s.L022301}
Consider the stochastic linear parabolic equation with transport noise
\begin{align}\label{e.L021401}
\begin{split}
&dU = (\Delta U + GU+f)dt + (\mathbf{P}\left(b\cdot\nabla U \right) + g)d\mathbb{W}_t, \\
&U(0) = U_0, 
\end{split}
\end{align}
where $U = (u, \rho)$, $GU = (\mathcal{P}(\rho e_3), 0)$ and $\mathbf{P}(b\cdot \nabla U) = (\mathcal{P}(b\cdot\nabla u), b\cdot\nabla \rho)$. We denote by $f:=(\overline f,f_4)$ and $g:=(\overline g,g_4)$ where $\overline f=(f_1,f_2,f_3)$ and $\overline g = (g_1, g_2, g_3)$ representing the first three components of $f$ and $g$, respectively. We also assume that $\nabla\cdot \overline f = \nabla\cdot \overline g = 0$. Here $f$ and $g$ are independent of $u$ and $\rho$. We first give a result concerning smooth data.
\begin{lemma}\label{l.L01221}
Let $p>2$. Assume the conditions on $b$ as in Assumption \ref{a.L022201} and 
\[U_0\in L^p(\Omega; W^{3,p}), f\in L^p(\Omega\times(0, T); W^{1,p}), g\in L^p(\Omega\times(0, T); \mathbb{W}^{2,p}).\]
Then there is a unique pathwise solution $U$ to \eqref{e.L021401} on $[0, T]$ such that 
\begin{align}\label{e.L020401}
U\in L^p(\Omega; C([0, T]; W^{2, p})).
\end{align}
\end{lemma}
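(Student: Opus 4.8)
The plan is to construct the solution as the limit of a Galerkin (finite Fourier mode) approximation, the essential ingredient being a uniform a priori estimate in $L^p(\Omega;C([0,T];W^{2,p}))$, with uniqueness then following from linearity. Fix a frequency cutoff $N$ and let $U^{(N)}$ solve the projection of \eqref{e.L021401} onto the span of the modes $|n|\le N$. Since $b\in L^\infty_tW^{2,\infty}_x$ (that is, $N_{b,2}<\infty$) and the data are smooth, each truncated system is a finite-dimensional linear SDE with bounded (random, time-dependent) coefficients, hence admits a unique global solution by standard SDE theory. The whole argument therefore reduces to the $N$-uniform a priori bound and the passage to the limit.

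For the a priori estimate I would apply $J^2$ to \eqref{e.L021401} and run It\^o's formula on $\|J^2U_j\|_p^p=\int_{\T^3}|J^2U_j|^p\dd x$ componentwise, summing over $j=1,\dots,4$. After integration by parts the Laplacian drift contributes the dissipation $-p(p-1)\int|J^2U_j|^{p-2}|\nabla J^2U_j|^2\dd x$, while the It\^o correction from the quadratic variation of the transport noise produces $+\tfrac{p(p-1)}{2}\int|J^2U_j|^{p-2}\sum_n(b_n\cdot\nabla J^2U_j)^2\dd x$, modulo commutator errors. The key is that these two second-order contributions combine into $-p(p-1)\int|J^2U_j|^{p-2}\sum_{i,k}\big(\varepsilon_{ik}-\tfrac12\sum_n b_n^ib_n^k\big)\partial_iJ^2U_j\,\partial_kJ^2U_j\dd x$, which by the super-parabolic condition of Assumption \ref{a.L022201} is bounded above by $-\nu\,c_p\int|\nabla(|J^2U_j|^{p/2})|^2\dd x$. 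This genuinely negative term is what absorbs the remaining contributions.

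The two delicate terms are the Leray projection and the stochastic integral. Since $J^2$ commutes with the Fourier multiplier $\mathbf{P}$, the only commutator to estimate is $[J^2,b_n\cdot\nabla]$, which is lower order and handled via $N_{b,2}<\infty$ and the equivalence of the $J^s$ and Sobolev norms. Writing $\mathbf{P}(b\cdot\nabla U)=b\cdot\nabla U-\nabla\Delta^{-1}\divv(b\cdot\nabla u)$, the orthogonal part looks like a full first-order term, but using $\nabla\cdot u=0$ and $\nabla\cdot b_n=0$ one finds $\divv(b_n\cdot\nabla u)=\sum_{i,k}\partial_ib_n^k\,\partial_ku^i$, which is one order lower, so elliptic regularity controls its contribution by $\|u\|_{W^{2,p}}$ without consuming dissipation. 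For the martingale term I would take the supremum in time and apply the BDG inequality \eqref{e.L10263}; splitting the bound by Young's inequality gives one piece absorbed into $\E\sup_t\|U^{(N)}\|_{W^{2,p}}^p$ with a small constant and one piece proportional to the dissipation, the latter absorbed precisely because $N_{b,0}$ obeys the smallness condition \eqref{e.L021403}. Collecting these yields, uniformly in $N$,
\[
\E\Big[\sup_{0\le t\le T}\|U^{(N)}\|_{W^{2,p}}^p+\int_0^T\sum_{j=1}^4\int_{\T^3}\big|\nabla(|J^2U_j^{(N)}|^{p/2})\big|^2\dd x\,\dd t\Big]\le C\,\E\Big[\|U_0\|_{W^{2,p}}^p+\int_0^T\big(\|f\|_{W^{1,p}}^p+\|g\|_{\mathbb{W}^{2,p}}^p\big)\dd t\Big].
\]

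Finally, the uniform bound gives weak-$*$ compactness of $U^{(N)}$ in $L^p(\Omega;L^\infty(0,T;W^{2,p}))$; passing to the limit in the linear equation identifies the limit as a solution, the higher regularity of the data being used to justify the It\^o formula and the limit. Since every term of the integrated equation is a.s. continuous in $W^{2,p}$ (the drift as a Bochner integral, the stochastic integral by the moment bound above), one upgrades membership to $L^p(\Omega;C([0,T];W^{2,p}))$. Uniqueness follows by applying the same energy computation to the difference of two solutions, which solves \eqref{e.L021401} with zero data and hence vanishes. I expect the main obstacle to be the a priori estimate itself: reconciling the non-local Leray projection with the $L^p$ energy method so that the transport noise is controlled at the scale of the dissipation, which is exactly the mechanism forcing the super-parabolic and smallness hypotheses on $b$.
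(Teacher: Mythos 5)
Your route is genuinely different from the paper's, which never runs an energy estimate at the $W^{2,p}$ level for this lemma. The paper instead exploits the triangular structure of the linear system: since $f,g$ are independent of $U$, the equation for $U_4=\rho$ decouples from $u$; it is solved by citing the stochastic maximal regularity result of Agresti--Veraar (Theorem 5.2 in \cite{AV21c}); then $\mathcal{P}(U_4e_3)$ is inserted as a forcing term into the velocity equation, which is solved by Theorem 3.2 of \cite{agresti2021stochastic-2}; the conclusion follows from the embedding $B^{3-2/p}_{p,p}\hookrightarrow W^{2,p}$, valid because $p>2$. A self-contained Galerkin argument, if it worked, would be a real alternative to these citations --- but your proposal has a genuine gap at its core.

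The gap is the interaction of the spectral cutoff with the transport noise. In the Galerkin system the noise coefficient is necessarily $P_N\mathbf{P}(b\cdot\nabla U^{(N)})$, where $P_N$ is the projection onto modes $|n|\le N$, so the It\^o correction in your expansion of $\|J^2U^{(N)}_j\|_p^p$ involves $\int|J^2U^{(N)}_j|^{p-2}\sum_n|P_NJ^2(b_n\cdot\nabla U^{(N)})_j|^2\dd x$ rather than $\int|J^2U^{(N)}_j|^{p-2}\sum_n|b_n\cdot\nabla J^2U^{(N)}_j|^2\dd x$. The super-parabolic cancellation you invoke is a \emph{pointwise} algebraic identity between the dissipation and the square of $b_n\cdot\nabla J^2U^{(N)}_j$, and it is destroyed by $P_N$. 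The commutator you do estimate, $[J^2,b_n\cdot\nabla]$, is indeed lower order; the one you ignore, $(I-P_N)\big(b_n\cdot\nabla J^2U^{(N)}_j\big)$, is first order in $U^{(N)}$, carries no smallness in $N$ or in $b$, and cannot be absorbed: for $p\neq2$ the projection $P_N$ has no contraction property in the weighted integral $\int|J^2U^{(N)}_j|^{p-2}|\cdot|^2\dd x$ (unlike the $L^2$ case, where orthogonality of $P_N$ saves the Galerkin scheme), and a H\"older bound yields $\|J^2U^{(N)}_j\|_p^{p-2}\|\nabla J^2U^{(N)}_j\|_p^2$, which is \emph{not} dominated by the dissipation $\int|J^2U^{(N)}_j|^{p-2}|\nabla J^2U^{(N)}_j|^2\dd x$ (there is no reverse H\"older inequality). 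This is exactly why the $L^p$-theory for gradient noise --- in the literature and in this paper, which mollifies the data in Theorem \ref{t.L021401} and invokes maximal regularity for the present smooth-data lemma --- avoids spectral Galerkin approximation. A secondary problem: your upgrade to $C([0,T];W^{2,p})$ does not follow as stated, since with only $U\in L^\infty_tW^{2,p}$ the drift integral is continuous in $L^p$ and the stochastic integral in at best $W^{1,p}$, not $W^{2,p}$; one needs either the trace space $B^{3-2/p}_{p,p}$ supplied by maximal regularity (the paper's route) or a weak-continuity plus norm-continuity/uniform-convexity argument, which in turn requires justifying It\^o's formula for the limit rather than only for the approximations.
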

\begin{proof}
The condition on $U_0$ implies that $U_0\in L^p(\Omega;B_{p,p}^{3-2/p})$ due to the embedding $W^{3,p}\hookrightarrow B_{p,p}^{3-2/p}$. As $f$ and $g$ are independent of $u$ and $\rho$, the equation for $U_4$ (which is $\rho$) is independent of $u = (U_1, U_2, U_3)$: 
\begin{align}\label{e.L021202}
dU_4 = (\Delta U_4 + f_4)dt +(b\cdot\nabla U_4 + g_4)d\mathbb{W}_t, \quad U_{4}(0) = U_{0,4}.
\end{align}
Under the conditions of Lemma \ref{l.L01221}, we can apply Theorem 5.2 in \cite{AV21c} (with $\kappa=0, p=q$) to deduce  the existence of a unique pathwise solution $U_4$ on $[0, T]$ to equation \eqref{e.L021202} such that $U_4\in L^p(\Omega; L^p((0, T]; W^{3, p}))\cap L^p(\Omega; C([0, T]; B_{p, p}^{3-2/p}))$. 
Let $ \widetilde{f} = \overline f + \mathcal{P}(U_4 e_3)$. The equation for the velocity $u$ then becomes 
\begin{align}\label{e.L021203}
du = (\Delta u + \widetilde{f})dt +(\mathcal{P}(b\cdot\nabla u) + \overline{g})d\mathbb{W}_t, \quad u(0) = (U_{0,1},U_{0,2},U_{0,3}).
\end{align}
In view of the conditions of Lemma \ref{l.L01221}, for equation \eqref{e.L021203} we can apply Theorem 3.2 in \cite{agresti2021stochastic-2} (with $\kappa=0, p=q,h=0$) to obtain a unique pathwise solution $u$ on $[0, T]$ to equation \eqref{e.L021203} such that 
$u\in L^p(\Omega; L^p((0, T]; W^{3, p}))\cap L^p(\Omega; C([0, T]; B_{p, p}^{3-2/p}))$.
The conclusion of the lemma then follows from  the embedding $B_{p,p}^{3-2/p}\hookrightarrow W^{2, p}$.
\end{proof}

Next, we consider rough condition for system \eqref{e.L021401}.

\begin{theorem}\label{t.L021401}
Assume the conditions (1b) and (2) on $b$ as in Assumption \ref{a.L022201}. Let $p>2$ and $0<T<\infty$. Suppose $U_0\in L^p(\Omega; L^p(\mathbb{T}^3))$, $f\in L^p(\Omega\times[0, T]; W^{-1, q}(\mathbb{T}^3)), g\in L^p(\Omega\times[0, T]; \mathbb{L}^p(\mathbb{T}^3))$, where 
\begin{align}\label{e.L030401}
\frac{3p}{p+1}<q\leq p.
\end{align}
Then there is a unique maximal solution $U\in L^p(\Omega; C([0, T], L^p))$ to equation \eqref{e.L021401} such that 
\begin{align}\label{e.L021402}
\begin{split}
    &\mathbb{E}\left[\sup _{0 \leq t \leq T}\|U(t, \cdot)\|_{p}^{p}+\sum_{j=1}^{4} \int_{0}^{T} \int_{\mathbb{T}^{3}}\left|\nabla\left(\left|U_{j}(t, x)\right|^{p / 2}\right)\right|^{2} d x d t\right] \\
    &\quad \leq  C\mathbb{E}\left[\left\|U_{0}\right\|_{p}^{p}+\int_{0}^{T}\|f(t, \cdot)\|_{-1, q}^{p} d t+\int_{0}^{T} \left\|g(t, \cdot)\right\|_{\mathbb{L}^{p}}^pd t\right],
\end{split}
\end{align}
where $C>0$ depends on $T, p, b$.
\end{theorem}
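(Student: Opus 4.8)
The plan is to build the solution by approximating the rough data with smooth data, for which Lemma~\ref{l.L01221} already supplies unique solutions, and then to pass to the limit using an a priori estimate of the form \eqref{e.L021402} that is uniform in the approximation parameter. Concretely, I would pick $U_0^{(n)}\in L^p(\Omega;W^{3,p})$, $f^{(n)}\in L^p(\Omega\times(0,T);W^{1,p})$ and $g^{(n)}\in L^p(\Omega\times(0,T);\mathbb W^{2,p})$ converging to $U_0,f,g$ in $L^p(\Omega;L^p)$, $L^p(\Omega\times(0,T);W^{-1,q})$ and $L^p(\Omega\times(0,T);\mathbb L^p)$ respectively, preserving the divergence-free and mean-zero structure, so that Lemma~\ref{l.L01221} yields smooth solutions $U^{(n)}\in L^p(\Omega;C([0,T];W^{2,p}))$. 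Since \eqref{e.L021401} is linear, the difference $U^{(n)}-U^{(m)}$ solves the same equation with data $(U_0^{(n)}-U_0^{(m)},f^{(n)}-f^{(m)},g^{(n)}-g^{(m)})$; applying the a priori estimate to the difference shows $\{U^{(n)}\}$ is Cauchy in $L^p(\Omega;C([0,T];L^p))$, its limit $U$ satisfies \eqref{e.L021402} by Fatou for the dissipation term, and uniqueness is immediate from the same estimate applied to the difference of two solutions with identical data. Thus the whole theorem reduces to proving \eqref{e.L021402} for the $U^{(n)}$ uniformly in $n$.

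For that estimate I would apply the It\^o formula componentwise to $\|U_j(t)\|_p^p=\int_{\mathbb T^3}|U_j|^p\,dx$ (legitimate for $p>2$ with $W^{2,p}$-valued solutions). The Laplacian produces $-p(p-1)\int|U_j|^{p-2}|\nabla U_j|^2=-\tfrac{4(p-1)}{p}\int|\nabla(|U_j|^{p/2})|^2$, the dissipation on the left of \eqref{e.L021402}, while the It\^o correction of the transport noise contributes $\tfrac{p(p-1)}{2}\int|U_j|^{p-2}\sum_k(b_k\cdot\nabla U_j)^2$, which combines with it exactly through the super-parabolic condition of Assumption~\ref{a.L022201} to leave a coercive remainder $-c\,\nu\int|\nabla(|U_j|^{p/2})|^2$. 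For the velocity the transport martingale $p\int|U_j|^{p-2}U_j(b_k\cdot\nabla U_j)\,dx=\int b_k\cdot\nabla|U_j|^p\,dx$ vanishes since $\divv b_k=0$, so only the orthogonal part of the projected noise and the additive $g$ survive in the martingale; these are handled by BDG \eqref{e.L10263} and Young's inequality, absorbing a fraction of $\mathbb E\sup_t\|U\|_p^p$ and sending the rest to a Gronwall term, with the smallness \eqref{e.L021403} furnishing the room for the absorption.

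The main obstacle, and the crux of the paper, is the nonlocal orthogonal part $R_k:=\nabla\Delta^{-1}\divv(b_k\cdot\nabla u)$ of the projected transport noise, which appears in both the quadratic-variation drift and the martingale and a priori carries a full derivative of $u$ at a scale stronger than the available dissipation. The key is that the divergence-free constraint kills the worst term: $\divv(b_k\cdot\nabla u)=\sum_{i,j}(\partial_i b_k^j)(\partial_j u_i)$, because $\sum_j b_k^j\partial_j(\divv u)=0$. Writing $(\partial_i b_k^j)(\partial_j u_i)=\partial_j[(\partial_i b_k^j)u_i]-(\partial_i\partial_j b_k^j)u_i$ and using that $\nabla\Delta^{-1}\partial_j$ is a Calder\'on--Zygmund operator while $\nabla\Delta^{-1}$ is smoothing on $\mathbb T^3$ gives $\|R_k\|_p\lesssim(\|\nabla b_k\|_\infty+\|\nabla^2 b_k\|_\infty)\|u\|_p$, hence $\sum_k\|R_k\|_p^2\lesssim N_{b,2}\|u\|_p^2$; this is precisely why $N_{b,2}<\infty$ is needed and why the estimate closes at the $L^p$ (not $W^{1,p}$) scale, since by H\"older $\sum_k\int|U_j|^{p-2}R_{k,j}^2\lesssim N_{b,2}\|U\|_p^p$ is a pure Gronwall term.

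Finally, the forcing $f\in W^{-1,q}$ is where \eqref{e.L030401} is sharp. The term $p\langle f_j,|U_j|^{p-2}U_j\rangle$ is bounded by $\|f_j\|_{-1,q}\,\||U_j|^{p-2}U_j\|_{1,q'}$, whose gradient part obeys $\|\,|U_j|^{p-2}\nabla U_j\|_{q'}\lesssim\||U_j|^{p/2-1}\|_r\|\nabla(|U_j|^{p/2})\|_2$ with $\tfrac1{q'}=\tfrac1r+\tfrac12$; the embedding $H^1(\mathbb T^3)\hookrightarrow L^6$ applied to $|U_j|^{p/2}$ controls $\|U_j\|_{3p}$ by the dissipation, and $r(\tfrac p2-1)\le 3p$ is equivalent to $q\ge\tfrac{3p}{p+1}$, so exactly under \eqref{e.L030401} one bounds the forcing by $\eta\int|\nabla(|U_j|^{p/2})|^2+C_\eta(\|f_j\|_{-1,q}^p+\|U_j\|_p^p)$ and absorbs the dissipation. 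Treating the buoyancy coupling $GU$ as part of the forcing via $\|\mathcal P(U_4 e_3)\|_{-1,q}\lesssim\|U_4\|_p$ (the temperature equation decouples and is estimated first), handling the $g$-term by H\"older and Young, summing over components, taking $\sup_t$ with BDG, and applying the stochastic Gronwall inequality then yields \eqref{e.L021402}. The maximal solution is produced by the usual localization via stopping times, and the global bound \eqref{e.L021402} shows no blow-up occurs, so the solution exists on all of $[0,T]$.
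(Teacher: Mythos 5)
Your proposal is correct, and its overall architecture coincides with the paper's proof: approximate the rough data by smooth data so that Lemma~\ref{l.L01221} applies, run the componentwise It\^o formula for $\|U_j\|_p^p$, treat the $W^{-1,q}$ forcing by duality plus Gagliardo--Nirenberg (your computation that the exponents close precisely when $q\geq 3p/(p+1)$ is exactly the paper's use of \eqref{e.L030401}), decompose $\mathbf{P}(b\cdot\nabla U)=b\cdot\nabla U-\mathbf{Q}(b\cdot\nabla U)$ and control the nonlocal part at the $L^p$ scale via the divergence-free identity $\nabla\cdot(b_n\cdot\nabla u)=\sum_{k,\ell}\partial_k b_n^{\ell}\,\partial_{\ell}u_k$ and elliptic estimates (this is \eqref{e.L022103}--\eqref{e.L022304}), estimate the temperature component first to handle the buoyancy $G(U)$ (the paper's \eqref{e.L021407}), and finally pass to the limit by linearity and a Cauchy/Gronwall argument. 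Where you genuinely differ is in the \emph{local} part of the transport noise, and your route is in fact sharper. First, you note that its martingale contribution vanishes identically, since $p\int_{\mathbb{T}^3}|U_j|^{p-2}U_j\,(b_n\cdot\nabla U_j)\,dx=\int_{\mathbb{T}^3}b_n\cdot\nabla(|U_j|^p)\,dx=0$ because $\nabla\cdot b_n=0$; the paper instead estimates this term nontrivially ($J_1$ in \eqref{e.L022107}) via BDG, which is the source of the $pC_{BDG}^2N_{b,0}$ contribution to the constant $K$. Second, you absorb the It\^o correction $\tfrac{p(p-1)}{2}\int|U_j|^{p-2}\sum_n(b_n\cdot\nabla U_j)^2$ into the dissipation using the super-parabolic condition (1b), whereas the paper bounds it by $2p(p-1)N_{b,0}\int|U_j|^{p-2}|\nabla U_j|^2$ (see \eqref{e.L121803}) and relies on the smallness \eqref{e.L021403} of $N_{b,0}$ to keep $K<p(p-1)$. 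Combined, your two observations would close the estimate without ever invoking \eqref{e.L021403}; since that condition is part of the theorem's hypotheses anyway, this discrepancy is harmless. One step you state too loosely: the quadratic variation is $\|b\cdot\nabla U_j-(\mathbf{Q}(b\cdot\nabla U))_j+g_j\|_{l^2}^2$, so the super-parabolic cancellation is not ``exact''; you must first separate the cross terms involving $\mathbf{Q}$ and $g$ by Young's inequality with a small parameter, using the strict gap $\nu>0$ to retain a coercive remainder. That adjustment is routine and does not affect the correctness of your argument.
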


\begin{proof}
The proof is divided into the following three steps. 
\vskip0.1in
{\it Step 1}. Let 
\[\rho_{\epsilon}(\cdot) = \frac{1}{\epsilon^3}\rho\left(\frac{\cdot}{\epsilon}\right)\]
be a standard mollifier and denote 
\[f^{\epsilon} = f * \rho_\epsilon, \quad g^{\epsilon} = g* \rho_\epsilon, \quad U_0^{\epsilon} = U_0 * \rho_\epsilon,\]
where $*$ represents the convolution.  The smoothness of the mollified objects and Lemma \ref{l.L01221} imply that the following system 
\begin{align}\label{e.L021207}
\begin{split}
&dU^{\epsilon} = (\Delta U^{\epsilon} + GU^{{\epsilon}}+f^{\epsilon})dt + (\mathbf{P}\left(b\cdot\nabla U^{\epsilon} \right)+ g^{\epsilon})d\mathbb{W}_t, \\
&U(0) = U_0^{\epsilon}, 
\end{split}
\end{align}
has a unique pathwise solution $U^{\epsilon}\in L^p(\Omega; C([0, T]; W^{2, p}))$. For each $j\in\{1, 2, 3, 4\}$, it follows from Ito's formula that  
\begin{align}\label{e.L021208}
\begin{split}
    \|U^{\epsilon}_j(t)\|_p^p &+ p(p-1)\int_0^t\int_{\mathbb{T}^3}|U^{\epsilon}_j(r)|^{p-2}|\nabla U^{\epsilon}_j(r)|^2dxdr\\ &= \|U^{\epsilon}_{0, j}\|_p^p 
    + p\int_0^t\int_{\mathbb{T}^3}|U^{\epsilon}_j(r)|^{p-2}U^{\epsilon}_j(r)\left((GU^{\epsilon})_j(r)+ f^{\epsilon}_j(r)\right)dxdr\\
    &+ p\int_0^t\int_{\mathbb{T}^3}|U^{\epsilon}_j(r)|^{p-2}U^{\epsilon}_j(r)\Big(\big(\mathbf{P}\left(b\cdot\nabla U^{\epsilon} \right)\big)_j+ g^{\varepsilon}_j(r)\Big)dxd\mathbb{W}_r\\
    &+ \frac{p(p-1)}{2}\int_0^t\int_{\mathbb{T}^3}|U^{\epsilon}_j(r)|^{p-2}\|\big(\mathbf{P}\left(b\cdot\nabla U^{\epsilon} \right)\big)_j+ g^{\varepsilon}_j(r)\|_{l^2}^2dxdr.
\end{split}
\end{align}

\vskip0.1in
{\it Step 2}. We now deduce estimate \eqref{e.L021402}  for $U^{\varepsilon}$. We first recall the following Poincar\'e type inequalities (see \cite{KZ06,KXZ})
\begin{align}\label{e.L031101}
\begin{split}
    &\left\||v|^{p-1}\right\|_{q} \leq C\left\|\nabla\left(|v|^{p-1}\right)\right\|_{q}, \\
    &\left\|\left|v\right|^{p-2} v\right\|_{q} \leq C\left\|\nabla\left(\left|v\right|^{p-2}v\right)\right\|_{q},\quad p,q>1,
\end{split}
\end{align}
which is valid for zero mean functions $v$ with sufficient regularity so that the expressions make sense. It then follows from \eqref{e.L031101} that 
\begin{align}\label{e.L030402}
\begin{split}
    p\int_0^t\int_{\mathbb{T}^3}|U^{\epsilon}_j(r)|^{p-2}U^{\epsilon}_j(r)&\left((GU^{\epsilon})_j(r)+ f^{\epsilon}_j(r)\right)dxdr\\
    &\leq C\int_0^t\|\nabla(|U^{\epsilon}_j(r)|^{p-2}U^{\epsilon}_j(r))\|_{q'}\|(GU^{\epsilon})_j(r)+ f^{\epsilon}_j(r)\|_{-1, q}dr,
\end{split}
\end{align}
where $1/q+1/q'=1$. Let $\bar{r}$ be such that $1/\bar{r}+1/2=1/q'$, then by H\"older inequality one has
\begin{align}\label{e.L030403}
\begin{split}
    \|\nabla(|U^{\epsilon}_j|^{p-2}U^{\epsilon}_j)\|_{q'}\leq C\left\|\left|U^{\epsilon}_j\right|^{p / 2-1} \nabla\left(\left|U^{\epsilon}_j\right|^{p / 2}\right)\right\|_{q^{\prime}} 
    &\leq C\left\|\left|U^{\epsilon}_j\right|^{p / 2-1}\right\|_{\bar{r}}\left\|\nabla\left(\left|U^{\epsilon}_j\right|^{p / 2}\right)\right\|_2\\
    & = C\left\|\left|U^{\epsilon}_j\right|^{p/ 2}\right\|_{\bar{r}(p-2)/p}^{(p-2)/p}\left\|\nabla\left(\left|U^{\epsilon}_j\right|^{p / 2}\right)\right\|_2.
\end{split}
\end{align}
In view of the condition \eqref{e.L030401}, we have 
\[2 \leq \frac{\bar{r}(p-2)}{p}<6.\]
By Poincar\'e type inequalities \eqref{e.L031101} and Gagliardo-Nirenberg inequality, we deduce 
\begin{align}\label{e.L030404}
    \left\|\left|U^{\epsilon}_j\right|^{p/ 2}\right\|_{\bar{r}(p-2)/p}\leq C\left\|\left|U^{\epsilon}_j\right|^{p/ 2}\right\|_{2}^{1-\alpha}\left\|\nabla\left(\left|U^{\epsilon}_j\right|^{p / 2}\right)\right\|_2^{\alpha},
\end{align}
where $\alpha = \frac32-\frac{3p}{\bar{r}(p-2)}$. Combining inequalities \eqref{e.L030402}-\eqref{e.L030404}, and Young's inequality with $\delta>0$, we obtain 
\begin{align}\label{e.L021404}
\begin{split}
&p\int_0^t\int_{\mathbb{T}^3}|U^{\epsilon}_j(r)|^{p-2}U^{\epsilon}_j(r)\left((GU^{\epsilon})_j(r)+ f^{\epsilon}_j(r)\right)dxdr\\
&\leq C\int_0^t\left\|\left|U^{\epsilon}_j\right|^{p/ 2}\right\|_{2}^{(1-\alpha)(p-2)/p}\left\|\nabla\left(\left|U^{\epsilon}_j\right|^{p / 2}\right)\right\|_2^{1+\alpha(p-2)/p}\|(GU^{\epsilon})_j(r)+ f^{\epsilon}_j(r)\|_{-1, q}dr\\
&\leq \delta \int_0^t\int_{\mathbb{T}^3}|U^{\epsilon}_j(r)|^{p-2}|\nabla U^{\epsilon}_j(r)|^2dxdr +\delta \int_0^t\|U^{\epsilon}_j(r)\|_p^pdr\\
&\hspace{2cm}+C_\delta\left (\int_0^t\left\|f^{\epsilon}_j(r)\right\|_{-1, q}^p d r + \int_0^t\left\|(GU^{\epsilon})_j(r)\right\|_{-1, p}^p d r \right),
\end{split}
\end{align}
where we used the fact 
\[\int_{\mathbb{T}^3}\left|\nabla\left|U^{\epsilon}_j\right|^{p / 2}\right|^2dx = \frac{p^2}{4}\int_{\mathbb{T}^3}|U^{\epsilon}_j(r)|^{p-2}|\nabla U^{\epsilon}_j(r)|^2dx.\]
To deal with the terms involving transport noise, we utilize the following property of Leray projection
\begin{align*}
   \mathcal{P}(b\cdot \nabla u^{\epsilon}) = b\cdot \nabla u^{\epsilon} - \nabla\Delta^{-1}\divv(b\cdot \nabla u^{\epsilon}),\quad  u^{\epsilon} = (U_1^{\epsilon},U_2^{\epsilon},U_3^{\epsilon}).
\end{align*}
As a result, we have 
\begin{align}\label{e.L022101}
    \mathbf{P}\left(b\cdot\nabla U^{\epsilon} \right) = b\cdot\nabla U^{\epsilon} - \mathbf{Q}\left(b\cdot\nabla U^{\epsilon} \right), \quad \mathbf{Q}\left(b\cdot\nabla U^{\epsilon}\right) := \big(\nabla\Delta^{-1}\divv(b\cdot \nabla u^{\epsilon}), 0\big).
\end{align}
Since $u^{\epsilon}$ is divergence free, one has 
\begin{align}\label{e.L022103}
    (\mathbf{Q}\left(b\cdot\nabla U^{\epsilon}\right))_j = \partial_j\Delta^{-1}\sum_{k, \ell=1}^3\partial_kb^{\ell}\partial_{\ell} u^{\epsilon}_{k}, \quad j=1, 2, 3.
\end{align}
To estimate the last term in \eqref{e.L021208}, we first note 
\begin{align}\label{e.L022102}
\begin{split}
     &\frac{p(p-1)}{2}\int_0^t\int_{\mathbb{T}^3}|U^{\epsilon}_j(r)|^{p-2}\|\big(\mathbf{P}\left(b\cdot\nabla U^{\epsilon} \right)\big)_j+ g^{\varepsilon}_j(r)\|_{l^2}^2dxdr\\
     &\leq p(p-1)\int_0^t\int_{\mathbb{T}^3}|U^{\epsilon}_j(r)|^{p-2}\|b\cdot\nabla U^{\epsilon}_j+ g^{\varepsilon}_j(r)\|_{l^2}^2dxdr
     \\
   &\hspace{1cm}+p(p-1)\int_0^t\int_{\mathbb{T}^3}|U^{\epsilon}_j(r)|^{p-2}\|(\mathbf{Q}\left(b\cdot\nabla U^{\epsilon}\right))_j\|_{l^2}^2dxdr
     : = I_1 + I_2.
\end{split}
\end{align}
For $I_1$, we have
\begin{align}\label{e.L121803}
\begin{split}
    I_1&\leq 2p(p-1)N_{b, 0}\int_0^t\int_{\mathbb{T}^3}|U^{\epsilon}_j(r)|^{p-2}|\nabla U^{\epsilon}_j(r)|^2dxdr + 2p(p-1)\int_0^t\int_{\mathbb{T}^3}|U^{\epsilon}_j(r)|^{p-2}\|g^{\epsilon}_j(r)\|_{l^2}^2dxdr\\
    &\leq 2p(p-1)N_{b, 0}\int_0^t\int_{\mathbb{T}^3}|U^{\epsilon}_j(r)|^{p-2}|\nabla U^{\epsilon}_j(r)|^2dxdr + \int_0^t\left\|U^{\epsilon}_j(r)\right\|_p^p d r+C \int_0^t\left\|g^{\epsilon}_j(r)\right\|_{\mathbb{L}^p}^p d r
\end{split}
\end{align}
where we applied Young’s inequality in the last step. For $I_2$, it follows from \eqref{e.L022103} that 
\begin{align}\label{e.L0221013}
\begin{split}
    I_2& = p(p-1)\int_0^t\int_{\mathbb{T}^3}|U^{\epsilon}_j(r)|^{p-2}\|(\mathbf{Q}\left(b\cdot\nabla U^{\epsilon}\right))_j\|_{l^2}^2dxdr\\
    &  = p(p-1)\int_0^t\int_{\mathbb{T}^3}|U^{\epsilon}_j(r)|^{p-2}\sum_{n=1}^{\dim\mathcal{H}}\left|\partial_j\Delta^{-1}\sum_{k, \ell=1}^3\partial_kb_{n}^{\ell}\partial_{\ell} u^{\epsilon}_{k}\right|^2dxdr\\
    &\leq p(p-1)\sum_{n=1}^{\dim\mathcal{H}}\int_0^t\left(\|U^{\epsilon}_j(r)\|_{p}^{p-2}\left\|\partial_j\Delta^{-1}\sum_{k, \ell=1}^3\partial_kb_{n}^{\ell}\partial_{\ell} u^{\epsilon}_{k}\right\|_{p}^2\right)dr
\end{split}
\end{align}
By standard elliptic estimates, we have 
\begin{align*}
       \left\|\partial_j\Delta^{-1}\left(\partial_kb_{n}^{\ell}\partial_{\ell} u^{\epsilon}_{k}\right)\right\|_{p}^2&\leq C\left\|\partial_kb_{n}^{\ell}\partial_{\ell} u^{\epsilon}_{k}\right\|_{W^{-1, p}}^2 
       \\
       &= C \left|\sup\limits_{\|\phi\|_{W^{1,p'}}=1} (\partial_kb_{n}^{\ell}\partial_{\ell} u^{\epsilon}_{k}, \phi) \right|^2= C \left|\sup\limits_{\|\phi\|_{W^{1,p'}}=1} (u^{\epsilon}_{k}, \partial_{\ell} ( \partial_kb_{n}^{\ell} \phi ) )\right|^2 \\
       &\leq  C\|\partial_kb_{n}^{\ell}\|_{L_{t, x}^{\infty}}^2\left\|u^{\epsilon}_{k}\right\|_{p}^2 + C_{E}\|\partial_{\ell}\partial_kb_{n}^{\ell}\|_{L_{t, x}^{\infty}}^2\left\| u^{\epsilon}_{k}\right\|_{p}^2
       \\
       &\leq C\max_{1\leq k, \ell\leq3}\|\partial_kb_{n}^{\ell}\|_{L_{t, x}^{\infty}}^2\left\|U^{\epsilon}\right\|_{p}^2 + C\max_{1\leq k, \ell\leq3}\|\partial_{\ell}\partial_kb_{n}^{\ell}\|_{L_{t, x}^{\infty}}^2\left\|U^{\epsilon}\right\|_{p}^2. 
\end{align*}
Therefore,
\begin{align}\label{e.L022304}
       I_2\leq C(N_{b,1}+N_{b,2})\int_0^t\left\|U^{\epsilon}(r)\right\|_p^p d r.
\end{align}
Combining this with \eqref{e.L022102} and \eqref{e.L121803}, we obtain 
\begin{align}\label{e.L022104}
  \begin{split}
     &\frac{p(p-1)}{2}\int_0^t\int_{\mathbb{T}^3}|U^{\epsilon}_j(r)|^{p-2}\|\big(\mathbf{P}\left(b\cdot\nabla U^{\epsilon} \right)\big)_j+ g^{\varepsilon}_j(r)\|_{l^2}^2dxdr\\
     \leq &2p(p-1)N_{b, 0}\int_0^t\int_{\mathbb{T}^3}|U^{\epsilon}_j(r)|^{p-2}|\nabla U^{\epsilon}_j(r)|^2dxdr 
     \\
     &\hspace{2cm}+ C \int_0^t\left\|U^{\epsilon}(r)\right\|_p^p d r+C \int_0^t\left\|g^{\epsilon}_j(r)\right\|_{\mathbb{L}^p}^p d r.
\end{split}
\end{align}

Next, the decomposition \eqref{e.L022101}, BDG inequality and Minkowski's inequality yield
\begin{align}\label{e.L022105}
\begin{split}
   &\mathbb{E}\sup_{t\in[0, T]}\left| \int_0^t\int_{\mathbb{T}^3}|U^{\epsilon}_j(r)|^{p-2}U^{\epsilon}_j(r)(b(r)\cdot\nabla U^{\epsilon}_j(r) + g^{\epsilon}_j(r))dxd\mathbb{W}_r\right|\\
   &\leq C_{BDG}\mathbb{E}\left(\int_0^T\left\|\int_{\mathbb{T}^3}|U^{\epsilon}_j(r)|^{p-2}U^{\epsilon}_j(r)b(r)\cdot\nabla U^{\epsilon}_j(r)dx\right\|_{l^2}^2dr\right)^{1/2}
\\&\quad+
C_{BDG}\mathbb{E}\left(\int_0^T\left\|\int_{\mathbb{T}^3}|U^{\epsilon}_j(r)|^{p-2}U^{\epsilon}_j(r)g^{\epsilon}_j(r)dx\right\|_{l^2}^2dr\right)^{1/2}\\
   &\quad + C_{BDG}\mathbb{E}\left(\int_0^T\left\|\int_{\mathbb{T}^3}|U^{\epsilon}_j(r)|^{p-2}U^{\epsilon}_j(r)\left(\mathbf{Q}\left(b\cdot\nabla U^{\epsilon}\right)\right)_j dx\right\|_{l^2}^2dr\right)^{1/2}: = J_1 + J_2 + J_3.
\end{split}
\end{align}
 By Minkowski's inequality and Hölder's inequality, we obtain
\begin{align}\label{e.L022107}
\begin{split}
   &J_1 = C_{BDG}\mathbb{E}\left(\int_0^T\left\|\int_{\mathbb{T}^3}|U^{\epsilon}_j(r)|^{p-2}U^{\epsilon}_j(r)b(r)\cdot\nabla U^{\epsilon}_j(r)dx\right\|_{l^2}^2dr\right)^{1/2}\\
   &\leq C_{BDG}\mathbb{E}\sup_{t\in[0,T]}\|U^{\epsilon}_j(t)\|_{p}^{p/2}\left(\int_0^T\int_{\mathbb{T}^3}|U^{\epsilon}_j(r)|^{p-2}\left\|b\cdot \nabla U^{\epsilon}_j(r)\right\|_{l^2}^2dxdr\right)^{1/2}\\
   &\leq \frac{1}{4p}\mathbb{E}\sup_{t\in[0, T]}\|U^{\epsilon}_j(t)\|_{p}^p + pC_{BDG}^2 N_{b,0}\mathbb{E}\int_0^T\int_{\mathbb{T}^3}|U^{\epsilon}_j(r)|^{p-2}|\nabla U^{\epsilon}_j(r)|^2dxdr.
 \end{split}
\end{align}
A similar estimate yields 
\begin{align}\label{e.L022106}
J_2\leq \frac{1}{4p}\mathbb{E}\sup_{t\in[0, T]}\|U^{\epsilon}_j(t)\|_{p}^p+C_T\mathbb{E}\int_0^T\|g^{\epsilon}_j(r)\|_{\mathbb{L}^p}^pdr.
\end{align}
To estimate $J_3$, we denote $h = |U^{\epsilon}_j(r)|^{p-2}U^{\epsilon}_j(r)$ and note that by integration by parts and \eqref{e.L022103}, since $\Delta^{-1}$ is a self-adjoint operator,
\begin{align}\label{e.L022108}
\begin{split}
 \int_{\mathbb{T}^3}h\left(\mathbf{Q}\left(b\cdot\nabla U^{\epsilon}\right)\right)_j dx &= \sum_{k, \ell=1}^3\int_{\mathbb{T}^3}h\partial_j\Delta^{-1}\big(\partial_kb^{\ell}\partial_{\ell} u^{\epsilon}_{k}\big) dx\\
 & = -\sum_{k, \ell=1}^3\int_{\mathbb{T}^3}\Delta^{-1}(\partial_jh )\partial_kb^{\ell}\partial_{\ell} u^{\epsilon}_{k}dx\\
 & = \sum_{k, \ell=1}^3\left(\int_{\mathbb{T}^3}\partial_{\ell} \Delta^{-1}(\partial_jh )\partial_kb^{\ell}u^{\epsilon}_{k}dx + \int_{\mathbb{T}^3} \Delta^{-1}(\partial_jh) \partial_{\ell}\partial_kb^{\ell}u^{\epsilon}_{k}dx\right).
 \end{split}
\end{align}
As a consequence, we have 
\begin{align}\label{e.L022109}
\begin{split}
&\left\|\int_{\mathbb{T}^3}h\left(\mathbf{Q}\left(b\cdot\nabla U^{\epsilon}\right)\right)_j dx\right\|_{l^2}\\
& \leq\sum_{k, \ell=1}^3\left(\int_{\mathbb{T}^3}|\partial_{\ell} \Delta^{-1}\partial_jh |\left(\sum_{n=1}^{\dim\mathcal{H}}|\partial_kb_n^{\ell}|^2\right)^{1/2}|u^{\epsilon}_{k}|dx + \int_{\mathbb{T}^3}| \Delta^{-1}\partial_jh| \left(\sum_{n=1}^{\dim\mathcal{H}}|\partial_{\ell}\partial_kb_n^{\ell}|^2\right)^{1/2}|u^{\epsilon}_{k}|dx\right)\\
&\leq N_{b, 1}^{1/2}\sum_{k, \ell=1}^3\int_{\mathbb{T}^3}|\partial_{\ell}\Delta^{-1}\partial_jh||u^{\epsilon}_{k}|dx + 3N_{b, 2}^{1/2}\sum_{k}^3\int_{\mathbb{T}^3}|\Delta^{-1}\partial_jh||u^{\epsilon}_{k}|dx.
 \end{split}
\end{align}
By Hölder's inequality and standard elliptic regularity estimates, we have for $1/p' +1/p=1$
\begin{align}\label{e.L0221010}
\begin{split}
\int_{\mathbb{T}^3}|\partial_{\ell}\Delta^{-1}\partial_jh||u^{\epsilon}_{k}|dx&\leq \|\partial_{\ell}\Delta^{-1}\partial_jh\|_{L^{p'}}\|u^{\epsilon}_{k}\|_{p}\\
&\leq C\|h\|_{p'}\|U^{\epsilon}\|_{L^{p}} \leq C\|U^{\epsilon}\|_{p}^{p-1}\|U^{\epsilon}\|_{p} =C\|U^{\epsilon}\|_{p}^{p}.
 \end{split}
\end{align}
Similarly we deduce 
\begin{align*}
\int_{\mathbb{T}^3}|\Delta^{-1}\partial_jh||u^{\epsilon}_{k}|dx\leq C\|U^{\epsilon}\|_{p}^{p}. 
\end{align*}
Combining this with \eqref{e.L022109} and \eqref{e.L0221010} one has 
\begin{align}\label{e.L0221011}
\begin{split}
J_3 & = C_{BDG}\mathbb{E}\left(\int_0^T\left\|\int_{\mathbb{T}^3}|h\left(\mathbf{Q}\left(b\cdot\nabla U^{\epsilon}\right)\right)_j dx\right\|_{l^2}^2dr\right)^{1/2}\\
&\leq C( N_{b, 1}^{1/2} + N_{b, 2}^{1/2})\mathbb{E}\left(\int_0^T\|U^{\epsilon}\|_{p}^{2p}dr\right)^{1/2}\\
&\leq \frac{1}{4 p} \mathbb{E} \sup _{t \in[0, T]}\left\|U_j^\epsilon(t)\right\|_p^p + C\mathbb{E}\int_0^T\left\|U^\epsilon\right\|_{p}^{p} d r.
\end{split}
\end{align}
Therefore by combining \eqref{e.L022105}, \eqref{e.L022106}, \eqref{e.L022107} and \eqref{e.L0221011}, we have 
\begin{align}\label{e.L121804}
\begin{split}
   &p\mathbb{E}\sup_{t\in[0, T]}\left| \int_0^t\int_{\mathbb{T}^3}|U^{\epsilon}_j(r)|^{p-2}U^{\epsilon}_j(r)(b\cdot\nabla U^{\epsilon}_j(r) + g^{\epsilon}_j(r))dxd\mathbb{W}_r\right|\\
   &\leq\frac34\mathbb{E}\sup_{t\in[0, T]}\|U^{\epsilon}_j(t)\|_{p}^p + p^2C_{BDG}^2N_{b,0}\mathbb{E}\int_0^T\int_{\mathbb{T}^3}|U^{\epsilon}_j(r)|^{p-2}|\nabla U^{\epsilon}_j(r)|^2dxdr\\
  &\quad+C\mathbb{E}\int_0^T\left\|U^\epsilon\right\|_{p}^{p} d r+C_T\mathbb{E}\int_0^T\|g^{\epsilon}_j(r)\|_{\mathbb{L}^p}^pdr.
\end{split}
\end{align}
Now taking supremum over $[0, T]$ on both sides of \eqref{e.L021208} and integrating over $\Omega$, using inequalities \eqref{e.L021404}, \eqref{e.L022104} and \eqref{e.L121804}, we derive for $j=1,2,3$, 
\begin{align}\label{e.L0221012}
\begin{split}
    &\mathbb{E}\left[\frac14\sup_{t\in[0, T]}\|U^{\epsilon}_j(t)\|_{p}^p +p(p-1)\int_0^T\int_{\mathbb{T}^3}|U^{\epsilon}_j(r)|^{p-2}|\nabla U^{\epsilon}_j(r)|^2dxdr \right]\\
    &\leq C_{\delta}\mathbb{E}\int_0^T\left\|U^\epsilon\right\|_{p}^{p} d r + K\mathbb{E}\int_0^T\int_{\mathbb{T}^3}|U^{\epsilon}_j(r)|^{p-2}|\nabla U^{\epsilon}_j(r)|^2dxdr  \\ 
    &\quad + C\mathbb{E}\left[\|U^{\epsilon}_0\|_p^p + \int_0^T\left\|f^{\epsilon}(r)\right\|_{-1, q}^p d r + \int_0^T\left\|(GU^{\epsilon})_j(r)\right\|_{-1, p}^p d r + \int_0^T\|g^{\epsilon}(r)\|_{\mathbb{L}^p}^pdr\right].
    \end{split}
\end{align}
Here 
\begin{align*}
K = \delta + 2p(p-1)N_{b, 0}+p^2C_{BDG}^2N_{b,0}.
\end{align*}
For $j=4$, observe that $(GU^{\epsilon})_4\equiv 0$ and $ (\mathbf{Q}\left(b\cdot\nabla U^{\epsilon}\right))_4 \equiv0$. Therefore from \eqref{e.L021404}, \eqref{e.L022102}-\eqref{e.L121803}, and  \eqref{e.L022105}-\eqref{e.L022107}, we obtain
\begin{align}\label{e.L022201}
\begin{split}
    &\mathbb{E}\left[\frac12\sup_{t\in[0, T]}\|U^{\epsilon}_4(t)\|_{p}^p +p(p-1)\int_0^T\int_{\mathbb{T}^3}|U^{\epsilon}_4(r)|^{p-2}|\nabla U^{\epsilon}_4(r)|^2dxdr \right]\\
    &\leq C_\delta \mathbb{E}\int_0^T\left\|U_4^\epsilon\right\|_{p}^{p} d r  + K\mathbb{E}\int_0^T\int_{\mathbb{T}^3}|U^{\epsilon}_4(r)|^{p-2}|\nabla U^{\epsilon}_4(r)|^2dxdr  \\ 
    &\quad + C\mathbb{E}\left[\|U^{\epsilon}_{0,4}\|_p^p + \int_0^T\left\|f^{\epsilon}(r)\right\|_{-1, q}^p d r  + \int_0^T\|g^{\epsilon}(r)\|_{\mathbb{L}^p}^pdr\right].
\end{split}
\end{align}
In view of the condition \eqref{e.L021403} on $b$, we can choose $\delta>0$ small such that $p(p-1)-K>0$. Fix such a $\delta$. Then 
\begin{align*}
\begin{split}
    &\mathbb{E}\left[\sup_{t\in[0, T]}\|U^{\epsilon}_4(t)\|_{p}^p +\int_0^T\int_{\mathbb{T}^3}|U^{\epsilon}_4(r)|^{p-2}|\nabla U^{\epsilon}_4(r)|^2dxdr \right]\\
    &\leq C\mathbb{E}\left[\|U^{\epsilon}_{0,4}\|_p^p + \int_0^T\left\|U_4^\epsilon\right\|_{p}^{p} d r   + \int_0^T\left\|f^{\epsilon}(r)\right\|_{-1, q}^p d r  + \int_0^T\|g^{\epsilon}(r)\|_{\mathbb{L}^p}^pdr\right].
\end{split}
\end{align*}
It follows from Gronwall's inequality that 
\begin{align*}
\begin{split}
    &\mathbb{E}\left[\sup_{t\in[0, T]}\|U^{\epsilon}_4(t)\|_{p}^p +\int_0^T\int_{\mathbb{T}^3}|U^{\epsilon}_4(r)|^{p-2}|\nabla U^{\epsilon}_4(r)|^2dxdr \right]\\
    &\leq C\mathbb{E}\left[\|U^{\epsilon}_{0,4}\|_p^p  + \int_0^T\left\|f^{\epsilon}(r)\right\|_{-1, q}^p d r  + \int_0^T\|g^{\epsilon}(r)\|_{\mathbb{L}^p}^pdr\right].
\end{split}
\end{align*}
This shows that for $j=1,2,3$, by the boundedness of Leray projection,  we have 
\begin{align}\label{e.L021407}
\begin{split}
\int_0^T\left\|(GU^{\epsilon})_j(r)\right\|_{-1, p}^p d r &= \int_0^T\left\|(\mathcal{P}(U^{\epsilon}_4 e_3))_j(r)\right\|_{-1, p}^p d r\\
&\leq C \int_0^T\left\|U^{\epsilon}_4(r) e_3\right\|_{-1, p}^p d r\leq C_T\sup_{t\in[0, T]}\|U^{\epsilon}_4(t)\|_{p}^p\\
&\leq C\mathbb{E}\left[\|U^{\epsilon}_{0,4}\|_p^p  + \int_0^T\left\|f^{\epsilon}(r)\right\|_{-1, q}^p d r  + \int_0^T\|g^{\epsilon}(r)\|_{\mathbb{L}^p}^pdr\right].
\end{split}
\end{align}
Combining \eqref{e.L0221012}, \eqref{e.L022201} and \eqref{e.L021407}, one obtains
\begin{align}\label{e.L022203}
\begin{split}
    &\mathbb{E}\left[\sup_{t\in[0, T]}\|U^{\epsilon}(t)\|_{p}^p +\sum_{j=1}^4\int_0^T\int_{\mathbb{T}^3}|U^{\epsilon}_j(r)|^{p-2}|\nabla U^{\epsilon}_j(r)|^2dxdr \right]\\
    &\leq  C\mathbb{E}\left[\|U^{\epsilon}_0\|_p^p + \int_0^T\left\|U^\epsilon\right\|_{p}^{p} d r   + \int_0^T\left\|f^{\epsilon}(r)\right\|_{-1, q}^p d r  + \int_0^T\|g^{\epsilon}(r)\|_{\mathbb{L}^p}^pdr\right].
    \end{split}
\end{align}
Again using Gronwall's inequality we obtain the desired estimate
\begin{align}\label{e.L022204}
\begin{split}
    &\mathbb{E}\left[\sup_{t\in[0, T]}\|U^{\epsilon}(t)\|_{p}^p +\sum_{j=1}^4\int_0^T\int_{\mathbb{T}^3}|U^{\epsilon}_j(r)|^{p-2}|\nabla U^{\epsilon}_j(r)|^2dxdr \right]\\
    &\leq  C\mathbb{E}\left[\|U^{\epsilon}_0\|_p^p  + \int_0^T\left\|f^{\epsilon}(r)\right\|_{-1, q}^p d r  + \int_0^T\|g^{\epsilon}(r)\|_{\mathbb{L}^p}^pdr\right].
    \end{split}
\end{align}

\vskip0.1in
{\it Step 3}. By the linearity of the equation and the result of the previous step, we have for $U^{\epsilon}-U^{\epsilon'}$, 
\begin{align*}
\begin{split}
    \mathbb{E}\Bigg[\sup_{t\in[0, T]}\|U^{\epsilon}(t)-&U^{\epsilon'}(t)\|_{p}^p \Bigg]
  \leq C\mathbb{E}\left[\|U^{\epsilon}_0-U^{\epsilon'}_0\|_p^p + \int_0^T\left\|f^{\epsilon}(r)-f^{\epsilon'}(r)\right\|_{-1, q}^p d r + \int_0^T\|g^{\epsilon}(r)-g^{\epsilon'}(r)\|_{\mathbb{L}^p}^pdr\right].
\end{split}
\end{align*}
By the fundamental properties of mollifiers, we see that the right hand side of the this inequality goes to zero as $\epsilon,\epsilon'\to0$. Therefore, there exists an element $U\in L^p(\Omega, C([0, T], L^p))$ and a subsequence $U^{\epsilon_n}\to U$ in $L^p(\Omega, L^{\infty}([0, T], L^p))$, where $\epsilon_n\to0$ as $n\to\infty$. Using integration by parts as in \cite{KXZ}, one can show that $U$ is a pathwise solution of \eqref{e.L021401}. Indeed, one has 
\begin{align*}
(U^{\epsilon_n}, \phi) = (U_0^{\epsilon_n}, \phi) &+ \int_0^t \Big((U^{\epsilon_n},A \phi) + (GU^{\epsilon_n} + f^{\epsilon_n}, \phi)\Big)dr 
\\
&+ \int_0^t \Big(-(\mathbf{P}\left(b \otimes U^{\epsilon_n} \right) , \nabla \phi) + (g^{\epsilon_n}, \phi) \Big)d\mathbb{W}_r, \quad (t,\omega)-\mathrm{a.e.},
\end{align*}
for all $\phi\in C^{\infty}(\mathbb{T}^3;\mathbb{R}^4)$ and $n\geq 1$. Then Hölder's inequality and dominated convergence theorem imply that as $n\to\infty$, 
\begin{align*}
(U^{\epsilon_n}, \phi)  - (U_0^{\epsilon_n}, \phi)&\to (U, \phi) - (U_0, \phi),\\
\int_0^t\Big(( U^{\epsilon_n}, A \phi) + (GU^{\epsilon_n} + f^{\epsilon_n}, \phi)\Big)dr&\to\int_0^t\Big((U, A\phi) +( GU + f, \phi)\Big)dr.
\end{align*}
By the BDG inequality and Minkowski's inequality, we have  
\begin{align*}
&\mathbb{E}\sup_{t\in[0, T]}\left|\int_0^t\Big(-\mathbf{P}(b\otimes(U^{\epsilon_n}-U)) , \nabla\phi)+ (g^{\epsilon_n}-g, \phi)\Big)d\mathbb{W}_r\right|\\
&\leq C\mathbb{E}\left(\int_0^T\left\|(\mathbf{P}(b\otimes(U^{\epsilon_n}-U)), \nabla\phi)\right\|_{l^2}^2dr\right)^{1/2}+C\mathbb{E}\left(\int_0^T\left\|( g^{\epsilon_n}-g,\phi)\right\|_{l^2}^2dr\right)^{1/2}\\
&\leq C_b\|\nabla\phi\|_{L^2}\|\mathbb{E}\sup_{t\in[0, T]}\|U^{\epsilon_n}(t)-U(t)\|_{p}+ C\|\phi\|_{L^2}\mathbb{E}\int_0^T\|g^{\epsilon_n}(t)-g(t)\|_{\mathbb{L}^p}^pdt,
\end{align*}
which converge to $0$ as $n\to\infty$. Hence there is a subsequence $\epsilon_{n_j}$ such that 
\[\int_0^t\Big(-\mathbf{P}(b\otimes U^{\epsilon_{n_j}}),\nabla\phi) +( g^{\epsilon_{n_j}}, \phi)\Big)d\mathbb{W}_r\to \int_0^t\Big(-\mathbf{P}(b\otimes U),\nabla\phi)+ (g, \phi)\Big)d\mathbb{W}_r, \quad (t,\omega)-\mathrm{a.e.}.\]
Hence $U$ is a pathwise solution of \eqref{e.L021401}. Lemma 4.4 in \cite{KXZ} with \eqref{e.L022204} also imply that \eqref{e.L021402} is true. 

To show the uniqueness,  we let $U_1, U_2$ be two pathwise solutions to \eqref{e.L021401}, then their difference $V=U_1-U_2$ solves 
\begin{align*}
\begin{split}
&dV = (A V + GV)dt + \mathbf{P}(b\cdot\nabla V) d\mathbb{W}_t, \\
&U(0) = 0. 
\end{split}
\end{align*}
Then Ito's formula and estimates as above give 
\begin{align*}
\mathbb{E}\sup _{0 \leq t \leq T}\|V(t, \cdot)\|_{p}^{p} =0. 
\end{align*}
So $V\equiv0$ almost surely. 
\end{proof}

\section{Local existence and uniqueness}\label{s.w02271}
In this section we will establish the existence and uniqueness of maximal pathwise solution for Boussinesq system \eqref{e.L10261}.
\subsection{Truncated system}
We first consider the following truncated system 
\begin{equation}\label{e.Q02181}
\begin{split}
&d U-\Delta U d t
=
\varphi\left(\left\|U\right\|_{p}\right)^2  B(U) dt +G(U) d t 
+\varphi\left(\left\|U\right\|_{p}\right)^2 \sigma(U) d \mathbb{W}_{t}+\mathbf{P}(b\cdot\nabla U) d \mathbb{W}_{t}, 
 \\&\nabla \cdot u=0, 
 \\& U(0)=U_{0},
\end{split}
\end{equation}
on $[0, \infty) \times \mathbb{T}^{3}$ with $\nabla \cdot u_{0}=0$ and $\int_{\mathbb{T}^{3}} U_{0} d x=0$ a.s..
Here for some fixed $\delta_0>0$ we denote by $\varphi:[0,\infty)\to [0,1]$ a decreasing smooth function such that $\varphi\equiv 1$ on $[0,\frac{\delta_0}{2}]$ and $\varphi\equiv 0$ on $[\delta_0,\infty).$ In addition, we have the Lipschitz continuity for $\varphi$:
\[
|\varphi(x_1)-\varphi(x_2)|\leq \frac{C}{\delta_0} |x_1-x_2|.
\]

The following theorem concerns the existence and uniqueness of solution to system \eqref{e.Q02181}.
\begin{theorem}
\label{t.02051} 
Let $p>5$ and $U_{0} \in L^{p}\left(\Omega ; L^{p}\right)$. For every $T>0$, there exists a unique pathwise solution $u \in L^{p}\left(\Omega ; C\left([0, T], L^{p}\right)\right)$ to \eqref{e.Q02181} such that
\begin{equation}
\mathbb{E}\left[\sup _{0 \leq s \leq T}\|U(s, \cdot)\|_{p}^{p}+\sum_{j} \int_{0}^{T} \int_{\mathbb{T}^{3}}\left|\nabla\left(\left|U_{j}(s, x)\right|^{p / 2}\right)\right|^{2} d x d s\right] \leq C \mathbb{E}\left[\left\|U_{0}\right\|_{p}^{p}\right]+C_{T}.
\end{equation}
\end{theorem}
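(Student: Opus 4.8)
The plan is to solve \eqref{e.Q02181} by a contraction mapping argument built on the linear theory of Theorem \ref{t.L021401}. I keep the heat semigroup, the buoyancy coupling $G$, and the transport noise $\mathbf{P}(b\cdot\nabla\,\cdot)$ inside the linear operator, and freeze only the genuinely nonlinear terms. Precisely, on the space $\mathcal{X}_T:=L^p(\Omega;C([0,T],L^p))$ of progressively measurable processes I define $\Phi:V\mapsto U$, where $U$ is the solution of the linear equation \eqref{e.L021401} with forcing
\[
f:=\varphi(\|V\|_p)^2 B(V)=-\varphi(\|V\|_p)^2\,\mathbf{P}\,\divv(v\otimes V),\qquad g:=\varphi(\|V\|_p)^2\sigma(V).
\]
Since $\nabla\cdot v=0$, writing $B(V)$ in divergence form gives $\|f\|_{-1,q}\le C\|v\otimes V\|_q\le C\|V\|_p^2$ as soon as $q\le p/2$; on the other hand Theorem \ref{t.L021401} requires $q>3p/(p+1)$, and the window $(3p/(p+1),p/2]$ is nonempty exactly when $p>5$. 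This is where the hypothesis $p>5$ is used. The Leray projection together with \eqref{e.Q031101} ensures $f,g$ are solenoidal and mean-zero in the sense required by Theorem \ref{t.L021401}, while \eqref{e.L10264} controls $\|g\|_{\mathbb{L}^p}$; hence $\Phi$ is well defined and maps $\mathcal{X}_T$ into itself.

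For the contraction I apply Theorem \ref{t.L021401} to $W:=\Phi(V_1)-\Phi(V_2)$, which by linearity solves \eqref{e.L021401} with zero data and forcings $f_1-f_2$, $g_1-g_2$, so that $\mathbb{E}\sup_{[0,T]}\|W\|_p^p\le C\,\mathbb{E}\int_0^T(\|f_1-f_2\|_{-1,q}^p+\|g_1-g_2\|_{\mathbb{L}^p}^p)\,dt$. Everything then reduces to the two Lipschitz bounds
\[
\|f_1-f_2\|_{-1,q}\le C\|V_1-V_2\|_p,\qquad \|g_1-g_2\|_{\mathbb{L}^p}\le C\|V_1-V_2\|_p .
\]
The bound for $g$ is standard: it follows from \eqref{e.L10264}, \eqref{e.L121807}, the Lipschitz continuity of $\varphi$, and the fact that $\|V_i\|_p\le\delta_0$ wherever $\varphi(\|V_i\|_p)\neq0$. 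The bound for $f$ is the heart of the matter and is handled by the double cut-off trick of \cite{KXZ}. Since $\varphi(\|V\|_p)$ is constant in $x$, the squared cut-off distributes one factor to each argument of the bilinear term, $\varphi^2\,v\cdot\nabla V=(\varphi v)\cdot\nabla(\varphi V)$; telescoping the difference across the two arguments and using $\|\varphi(\|V_1\|_p)V_1-\varphi(\|V_2\|_p)V_2\|_p\le C\|V_1-V_2\|_p$ (again from $\|V_i\|_p\le\delta_0$ and the Lipschitz bound on $\varphi$), together with $\|a\otimes b\|_q\le\|a\|_p\|b\|_p$ and $q\le p/2$, yields the claim with constant depending only on $\delta_0,p,b$.

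Combining these, $\mathbb{E}\sup_{[0,T]}\|\Phi(V_1)-\Phi(V_2)\|_p^p\le C\,\mathbb{E}\int_0^T\|V_1-V_2\|_p^p\,dt\le CT\,\mathbb{E}\sup_{[0,T]}\|V_1-V_2\|_p^p$, so $\Phi$ is a contraction on $\mathcal{X}_{T^*}$ for $T^*$ small depending only on $\delta_0,p,b$; its unique fixed point is the solution on $[0,T^*]$. Because the truncation makes the Lipschitz and a priori constants uniform in the data, $T^*$ is independent of the initial size, so one restarts from $U(T^*)$ (which again lies in $L^p(\Omega;L^p)$ and stays solenoidal and mean-zero) and iterates over $[T^*,2T^*],\dots$ to reach any horizon $T$ in finitely many steps. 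Finally, the energy estimate follows by applying Theorem \ref{t.L021401} to the solution itself: the cut-off forces $\|\varphi^2 B(U)\|_{-1,q}\le C$ and $\|\varphi^2\sigma(U)\|_{\mathbb{L}^p}\le C$ uniformly, so the right-hand side of \eqref{e.L021402} collapses to $C\mathbb{E}\|U_0\|_p^p+C_T$. I expect the main obstacle to be the Lipschitz estimate for $f$: making the double cut-off trick precise in the negative-order norm $W^{-1,q}$ at the endpoint scaling $q\approx p/2$, which is precisely what forces $p>5$.
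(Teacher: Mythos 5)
Your argument is correct in substance, but it is not the paper's proof: it is a genuinely different and more economical route. The paper never forms a single contraction map with the cut-off frozen at the input. Instead it uses the double cut-off scheme \eqref{e.Q02101}, in which the truncation $\varphi(\|U^{(n)}\|_p)\,\varphi(\|U^{(n-1)}\|_p)$ keeps one factor evaluated at the \emph{unknown} $U^{(n)}$; since each step of that scheme is therefore still a nonlinear equation, the paper needs a second, inner iteration \eqref{e.Q02105} (Lemmas \ref{l.w02053} and \ref{l.w02054}) before running the difference estimates, passing to the limit in the weak formulation, proving uniqueness, and restarting in time. What the double cut-off buys is that in every term of the telescoped differences (the $f_i^{(1)},\dots,f_i^{(4)}$ and $g^{(1)},g^{(2)},g^{(3)}$ in the proof of Theorem \ref{t.02051}) the unbounded factors $\mathbf{P}\bigl(u_i^{(n)}U^{(n)}\bigr)$ and $\sigma\bigl(U^{(n)}\bigr)$ always appear multiplied by a cut-off evaluated at the \emph{same} iterate, so boundedness by $\delta_0$ is manifest term by term, with no further argument.

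Your single cut-off buys a one-level Banach fixed point, but it relocates exactly that difficulty into the key inequality $\|\varphi(\|V_1\|_p)V_1-\varphi(\|V_2\|_p)V_2\|_p\le C\|V_1-V_2\|_p$, and your justification of it (``from $\|V_i\|_p\le\delta_0$ wherever $\varphi(\|V_i\|_p)\neq0$'') is too quick: the difference $\varphi_1-\varphi_2$ can be nonzero when exactly one of the cut-offs vanishes, and then it multiplies a factor whose norm is not controlled by $\delta_0$. The inequality is nevertheless true, by a short case analysis that should be supplied: assume without loss of generality $\|V_1\|_p\le\|V_2\|_p$; if $\|V_1\|_p\ge\delta_0$ both cut-offs vanish and the difference is zero; otherwise write $\varphi_1V_1-\varphi_2V_2=(\varphi_1-\varphi_2)V_1+\varphi_2(V_1-V_2)$, so that the cut-off difference multiplies the \emph{smaller-norm} argument, and use $|\varphi_1-\varphi_2|\,\|V_1\|_p\le \frac{C}{\delta_0}\|V_1-V_2\|_p\cdot\delta_0$ to get the Lipschitz bound $(1+C)\|V_1-V_2\|_p$. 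The same device handles $\varphi^2\sigma$ via \eqref{e.L10264} and \eqref{e.L121807}. With this point repaired, the rest of your proof goes through: the identification of where $p>5$ enters (nonemptiness of $(3p/(p+1),p/2]$ is the same arithmetic as \eqref{e.Q02111} in Lemma \ref{l.w02053}), the data-independent $T^*$ and restarting (identical to the paper's final paragraph), and the a priori bound obtained by feeding the uniformly bounded truncated forcings into \eqref{e.L021402}.
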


In order to solve system \eqref{e.Q02181}, we consider the first iteration procedure
\begin{equation}\label{e.Q02101}
\begin{split}
d U^{(n)}-\Delta U^{(n)} d t
&=
\varphi\left(\left\|U^{(n)}\right\|_{p}\right) \varphi\left(\left\|U^{(n-1)}\right\|_{p}\right)  B(U^{(n-1)}) dt +G(U^{(n)}) d t 
\\&\quad+\varphi\left(\left\|U^{(n)}\right\|_{p}\right) \varphi\left(\left\|U^{(n-1)}\right\|_{p}\right) \sigma(U^{(n-1)}) d \mathbb{W}_{t} 
+
\mathbf{P}(b\cdot\nabla U^{(n)})d \mathbb{W}_{t} 
 \\&\nabla \cdot u^{(n)}=0, 
 \\& U^{(n)}(0)=U_{0} ,
\end{split}
\end{equation}
where $U^{(0)}$ is the pathwise solution to
\begin{equation}\label{e.Q02102}
\begin{split}
& d U^{(0)}-\Delta U^{(0)} d t    = G(U^{(0)}) dt + \mathbf{P}( b\cdot\nabla U^{(0)}) d \mathbb{W}_{t} , \\
& \nabla \cdot u^{(0)}=0, \\
& U^{(0)}(0)=U_{0} .
\end{split}
\end{equation}
By Threorem \ref{t.L021401}, we obtain an unique solution $U^{(0)} \in L^{p}(\Omega ; C([0, T], L^{p}))$ and
\begin{align*}
\begin{split}
    \mathbb{E}\left[\sup _{0 \leq t \leq T}\|U^{(0)}(t, \cdot)\|_{p}^{p}+\sum_{j=1}^{4} \int_{0}^{T} \int_{\mathbb{T}^{3}}\left|\nabla\left(\left|U^{(0)}_{j}(t, x)\right|^{p / 2}\right)\right|^{2} d x d t\right] 
    \leq  C\mathbb{E}\left[\left\|U_{0}\right\|_{p}^{p}\right].
\end{split}
\end{align*}
Next, one needs to establish the existence of unique solution to system \eqref{e.Q02101} for each $n\in\mathbb N$. For this purpose, we consider
\begin{align}\label{e.Q02103}
    &dU - \Delta U dt =  \varphi\left(\|U\|_p \right)\varphi\left(\|V\|_p \right) B(V) dt + G(U) dt + \varphi\left(\|U\|_p \right)\varphi\left(\|V\|_p \right)\sigma(V) d\mathbb{W}_t + \mathbf{P}(b\cdot\nabla U) d \mathbb{W}_{t} \nonumber  
    \\
    &\nabla\cdot u = 0\nonumber
    \\
    & U(0) = U_0  ,
\end{align}
where $V=(v,\theta)$ satisfying $\nabla\cdot v= 0$ and
\begin{align}\label{e.Q02104}
    \mathbb{E}\left[\sup _{0 \leq t \leq T}\left\|V(t, \cdot)\right\|_{p}^{p}+\sum_{j=1}^4 \int_{0}^{T} \int_{\mathbb{T}^{3}}\left|\nabla\left(\left|V_{j}(t, x)\right|^{p / 2}\right)\right|^{2} d x d t\right] \leq C \mathbb{E}\left[\left\|U_{0}\right\|_{p}^{p}\right]+C_{T}.
\end{align}
To the end, we employ the second iteration process
\begin{align}\label{e.Q02105}
    &dU^{(n)} - \Delta U^{(n)}dt = \varphi\left(\|U^{(n-1)}\|_p\right)\varphi\left(\|V\|_p\right) B(V) dt + G(U^{(n)}) dt \nonumber
    \\
    &\hspace{4cm}+ \varphi\left(\|U^{(n-1)}\|_p\right)\varphi\left(\|V\|_p\right) \sigma(V) d\mathbb{W}_t  +\mathbf{P}( b\cdot\nabla U^{(n)} ) d \mathbb{W}_{t}, \nonumber 
    \\
    &\nabla\cdot u^n = 0,\nonumber
    \\
    & U^n(0) = U_0  ,
\end{align}
in order to solve system \eqref{e.Q02103}. Here $U^{(0)}$ is the solution to \eqref{e.Q02102}, but for $n\geq 1$ the $U^{(n)}$ in \eqref{e.Q02105} is different from the one in \eqref{e.Q02101}.


The following lemma concerns the induction step in order to establish the existence of unique solution to system \eqref{e.Q02105} for each $n$.
 \begin{lemma}
 \label{l.w02053}
 Let $p>5, n \in \mathbb{N}$, and $T>0$. Suppose $U_{0} \in L^{p}\left(\Omega ; L^{p}\right)$ and assume that for each $n \in\{1,2, \ldots, k-1\}$, there exists a unique solution $U^{(n)} \in$ $L^{p}\left(\Omega ; C\left([0, T], L^{p}\right)\right)$ to the initial value problem \eqref{e.Q02105}, where $V$ and $U^{(n)}$ satisfy \eqref{e.Q02104}. Then for $n=k$, the initial value problem \eqref{e.Q02105} also has a unique solution $U^{(k)} \in L^{p}\left(\Omega ; C\left([0, T], L^{p}\right)\right)$, and moreover,
\begin{equation}\label{e.q02113}
\mathbb{E}\left[\sup _{0 \leq t \leq T}\left\|U^{(k)}(t, \cdot)\right\|_{p}^{p}+\sum_{j=1}^4 \int_{0}^{T} \int_{\mathbb{T}^{3}}\left|\nabla\left(\left|U_{j}^{(k)}(t, x)\right|^{p / 2}\right)\right|^{2} d x d t\right] \leq C \mathbb{E}\left[\left\|U_{0}\right\|_{p}^{p}\right]+C_{T}.
\end{equation}
 \end{lemma}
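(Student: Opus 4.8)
The plan is to exploit the fact that, for $n=k$, system \eqref{e.Q02105} is \emph{linear} in the unknown $U^{(k)}$: the cut-off factors $\varphi(\|U^{(k-1)}\|_p)$ and $\varphi(\|V\|_p)$, the convection term $B(V)$, and the noise coefficient $\sigma(V)$ all depend only on the data $U^{(k-1)}$ (which exists by the induction hypothesis) and $V$, never on $U^{(k)}$ itself, while the buoyancy coupling $G(U^{(k)})$ is linear and coincides with the term $GU$ in \eqref{e.L021401}. Setting
\[
f := \varphi(\|U^{(k-1)}\|_p)\,\varphi(\|V\|_p)\,B(V), \qquad g := \varphi(\|U^{(k-1)}\|_p)\,\varphi(\|V\|_p)\,\sigma(V),
\]
equation \eqref{e.Q02105} is precisely the linear problem \eqref{e.L021401} with forcing $(f,g)$. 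The induction hypothesis guarantees that $\varphi(\|U^{(k-1)}\|_p)$ is a well-defined, progressively measurable scalar valued in $[0,1]$, so $f$ and $g$ are legitimate adapted forcings, and their first three components inherit the solenoidal structure from $\mathbf P$ and from \eqref{e.Q031101}. It therefore suffices to verify the hypotheses of Theorem \ref{t.L021401} on $(f,g)$ and read off its conclusion.

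Checking $g$ is immediate: by the linear growth bound \eqref{e.L10264} and $0\le\varphi\le1$ one has $\|g\|_{\mathbb L^p}\le C(\|V\|_p+1)$, so $\mathbb E\int_0^T\|g\|_{\mathbb L^p}^p\,dt\le C_T\,\mathbb E[\sup_{[0,T]}\|V\|_p^p+1]$, finite by \eqref{e.Q02104}. The substantive point is $f$. Here I would first fix the exponent $q$: since $p>5$ the interval $\bigl(\tfrac{3p}{p+1},\tfrac p2\bigr)$ is nonempty (indeed $\tfrac{3p}{p+1}<\tfrac p2\iff p>5$), so choose $q$ in it, which is admissible for \eqref{e.L030401}. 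Writing each component of $B(V)$ as $\divv$ of a quadratic (using $\nabla\cdot v=0$) and using the boundedness of the Leray projection and of $\divv:L^q\to W^{-1,q}$, together with Hölder's inequality with $\tfrac1q=\tfrac1p+\tfrac1s$, gives
\[
\|f\|_{-1,q}\le C\,\varphi(\|V\|_p)\,\|v\otimes V\|_{q}\le C\,\varphi(\|V\|_p)\,\|v\|_p\,\|V\|_{s}.
\]
The decisive gain comes from the cut-off: whenever $\varphi(\|V\|_p)\neq0$ one has $\|v\|_p\le\|V\|_p\le\delta_0$, so $\varphi(\|V\|_p)\|v\|_p\le\delta_0$. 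Finally, $q<\tfrac p2$ forces $\tfrac1s=\tfrac1q-\tfrac1p>\tfrac1p$, i.e. $s<p$, whence the finite measure of $\mathbb T^3$ yields $\|V\|_s\le C\|V\|_p$. Thus $\|f\|_{-1,q}\le C\delta_0\|V\|_p$ and $\mathbb E\int_0^T\|f\|_{-1,q}^p\,dt\le C_T\delta_0^p\,\mathbb E[\sup_{[0,T]}\|V\|_p^p]$, again finite by \eqref{e.Q02104}.

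With these bounds, Theorem \ref{t.L021401} produces a unique $U^{(k)}\in L^p(\Omega;C([0,T],L^p))$ solving \eqref{e.L021401}, hence \eqref{e.Q02105}, together with estimate \eqref{e.L021402}. Substituting the two bounds just obtained for $\int_0^T\|f\|_{-1,q}^p\,dt$ and $\int_0^T\|g\|_{\mathbb L^p}^p\,dt$ into the right-hand side of \eqref{e.L021402} turns it into $C\,\mathbb E\|U_0\|_p^p+C_T$, which is exactly \eqref{e.q02113}; uniqueness of $U^{(k)}$ is inherited from the uniqueness statement in Theorem \ref{t.L021401}.

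I expect the main obstacle to be controlling the convection forcing $f$ in $W^{-1,q}$ using only the $L^p$-information on $V$ from \eqref{e.Q02104}. Since $B(V)$ is quadratic, a naïve estimate produces the norm $\|V\|^{2p}$, whose time integral cannot be absorbed by the available energy; the cut-off is what trades one factor $\|v\|_p$ for the small constant $\delta_0$ and lowers the power to $\|V\|_p^p$. The threshold $p>5$ enters exactly at the choice of $q$: it is what permits $q<p/2$, hence $s<p$, so that the embedding $L^p\hookrightarrow L^s$ closes the estimate with no derivative loss. (For $3<p\le5$ one would be forced to take $s\ge p$ and would have to recover the missing integrability by interpolating $\|V\|_s$ between $L^p$ and $L^{3p}$, the latter controlled by $\|\nabla(|V|^{p/2})\|_2$ through Sobolev embedding, together with a Hölder-in-time argument, which is more delicate.)
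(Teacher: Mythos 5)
Your proposal is correct and follows essentially the same route as the paper: recast \eqref{e.Q02105} as the linear problem \eqref{e.L021401} with forcings $f=\varphi^{(k-1)}\varphi_V B(V)$ and $g=\varphi^{(k-1)}\varphi_V\sigma(V)$, bound $f$ in $W^{-1,q}$ by writing $B(V)$ in divergence form and applying H\"older with exponents at most $p$ (where the constraint $\frac{3p}{p+1}<q$ together with the needed H\"older split is exactly what forces $p>5$), control $g$ via \eqref{e.L10264}, and invoke Theorem \ref{t.L021401}. The only cosmetic difference is that the paper absorbs both factors of the quadratic term with the cut-off ($\varphi_V\|V\|_p^{2p}\le\delta_0^{2p}$, yielding $C_T$ directly), while you trade one factor for $\delta_0$ and control the remaining $\|V\|_p^p$ by \eqref{e.Q02104}; both are valid.
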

 
Assuming Lemma \ref{l.w02053} holds, one can achieve the existence of unique solution to \eqref{e.Q02105} for each $n$ from the existence of $U^{(0)}$ solving \eqref{e.Q02102} and mathematical induction. The next lemma shows the existence of unique solution to \eqref{e.Q02103}.

\begin{lemma}
\label{l.w02054}
Let $p>5$ and suppose that $U_{0} \in L^{p}\left(\Omega ; L^{p}\right)$. Then there exists a time $t>0$ small enough such that the initial value problem \eqref{e.Q02103}, where V satisfies \eqref{e.Q02104}, has a unique pathwise solution $U \in L^{p}\left(\Omega ; C\left([0, t], L^{p}\right)\right)$, which satisfies
\begin{equation}\label{e.Q02191}
\mathbb{E}\left[\sup _{0 \leq s \leq t}\|U(s, \cdot)\|_{p}^{p}+\sum_{j=1}^4 \int_{0}^{t} \int_{\mathbb{T}^{3}}\left|\nabla\left(\left|U_{j}(s, x)\right|^{p / 2}\right)\right|^{2} d x d s\right] \leq C \mathbb{E}\left[\left\|U_{0}\right\|_{p}^{p}\right]+C_{t}
\end{equation}
\end{lemma}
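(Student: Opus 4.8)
The plan is to obtain $U$ as the limit of the iterates $U^{(n)}$ defined by \eqref{e.Q02105}. By Lemma \ref{l.w02053} together with induction (starting from $U^{(0)}$ solving \eqref{e.Q02102}), each $U^{(n)}\in L^p(\Omega;C([0,T],L^p))$ exists, is unique, and satisfies the uniform bound \eqref{e.q02113}. It therefore suffices to show that for $t$ sufficiently small $\{U^{(n)}\}$ is Cauchy in $L^p(\Omega;C([0,t],L^p))$; the limit will then be the desired solution of \eqref{e.Q02103}.

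First I would set $W^{(n)}:=U^{(n)}-U^{(n-1)}$ and subtract the two copies of \eqref{e.Q02105}. Since the cutoff multiplying $B(V)$ and $\sigma(V)$ is $\varphi(\|U^{(n-1)}\|_p)\varphi(\|V\|_p)$ with $V$ fixed, the only coupling of $W^{(n)}$ to $W^{(n-1)}$ is through the factor $\varphi(\|U^{(n-1)}\|_p)-\varphi(\|U^{(n-2)}\|_p)$, while $G(U^{(n)})-G(U^{(n-1)})=G(W^{(n)})$ is linear. Thus $W^{(n)}$ solves a linear problem of exactly the form \eqref{e.L021401} with zero initial datum, linear part $\Delta W^{(n)}+G(W^{(n)})$ and transport noise $\mathbf{P}(b\cdot\nabla W^{(n)})$, and forcing
\begin{align*}
f&=\big(\varphi(\|U^{(n-1)}\|_p)-\varphi(\|U^{(n-2)}\|_p)\big)\varphi(\|V\|_p)B(V),\\
g&=\big(\varphi(\|U^{(n-1)}\|_p)-\varphi(\|U^{(n-2)}\|_p)\big)\varphi(\|V\|_p)\sigma(V).
\end{align*}
Both $f,g$ have solenoidal first three components, so the linear estimate \eqref{e.L021402} of Theorem \ref{t.L021401} applies and yields
\[\mathbb{E}\sup_{[0,t]}\|W^{(n)}\|_p^p\le C\,\mathbb{E}\int_0^t\Big(\|f(r)\|_{-1,q}^p+\|g(r)\|_{\mathbb{L}^p}^p\Big)\,dr.\]

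The heart of the argument, and the step I expect to be the main obstacle, is bounding this forcing by $\sup_{[0,t]}\|W^{(n-1)}\|_p^p$ with a coefficient that vanishes as $t\to0$. For the drift I would write $B(V)=-\mathbf{P}\,\divv(v\otimes V)$ (valid since $\nabla\cdot v=0$) and use boundedness of $\mathbf{P}$ and $\divv$ on the relevant spaces, giving $\|B(V)\|_{-1,q}\le C\|v\otimes V\|_q\le C\|V\|_{2q}^2$. Here $p>5$ enters decisively: since $\tfrac{3p}{p+1}<\tfrac p2$ exactly when $p>5$, I can fix $q$ in the admissible range \eqref{e.L030401} with $\tfrac{3p}{p+1}<q\le\tfrac p2$, so that $2q\le p$ and the finite-measure embedding $L^p(\mathbb{T}^3)\hookrightarrow L^{2q}$ gives $\|B(V)\|_{-1,q}\le C\|V\|_p^2$, with no gradient of $V$ needed. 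The cutoff is then used crucially: $\varphi(\|V\|_p)\neq0$ forces $\|V\|_p\le\delta_0$, whence $\varphi(\|V\|_p)\|B(V)\|_{-1,q}\le C\delta_0^2$, and combined with the Lipschitz bound $|\varphi(\|U^{(n-1)}\|_p)-\varphi(\|U^{(n-2)}\|_p)|\le\tfrac{C}{\delta_0}\|W^{(n-1)}\|_p$ this yields $\|f(r)\|_{-1,q}\le C\delta_0\|W^{(n-1)}(r)\|_p$. For the noise I argue identically, using \eqref{e.L10264} and $\|V\|_p\le\delta_0$ on the support of $\varphi(\|V\|_p)$ to get $\varphi(\|V\|_p)\|\sigma(V)\|_{\mathbb{L}^p}\le C$, hence $\|g(r)\|_{\mathbb{L}^p}\le\tfrac{C}{\delta_0}\|W^{(n-1)}(r)\|_p$. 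Integrating in time and pulling out the supremum then gives
\[\mathbb{E}\sup_{[0,t]}\|W^{(n)}\|_p^p\le C(\delta_0)\,t\,\mathbb{E}\sup_{[0,t]}\|W^{(n-1)}\|_p^p.\]

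Finally, choosing $t$ so small that $C(\delta_0)t<1$ makes the iteration a contraction, so $\{U^{(n)}\}$ is Cauchy and converges to some $U\in L^p(\Omega;C([0,t],L^p))$. Passing to the limit in the variational formulation of \eqref{e.Q02105}—using the Lipschitz continuity of $\varphi$, the bounds on $B(V),\sigma(V)$, and the same BDG-plus-dominated-convergence argument as in Step 3 of the proof of Theorem \ref{t.L021401}—identifies $U$ as a pathwise solution of \eqref{e.Q02103}. Uniqueness follows by applying the identical difference estimate to two solutions, giving $\mathbb{E}\sup_{[0,t]}\|U-\tilde U\|_p^p\le C(\delta_0)t\,\mathbb{E}\sup_{[0,t]}\|U-\tilde U\|_p^p$ and hence $U=\tilde U$. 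The energy estimate \eqref{e.Q02191} is inherited from the uniform bound \eqref{e.q02113} through Fatou's lemma and weak lower semicontinuity of the dissipation term.
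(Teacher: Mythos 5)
Your proposal is correct and follows essentially the same route as the paper: form the difference of successive iterates of \eqref{e.Q02105}, observe that it solves a linear system of the form \eqref{e.L021401} with zero data and forcing controlled via the Lipschitz property of $\varphi$, the cutoff bound $\varphi(\|V\|_p)\|V\|_p\le\delta_0$, and the choice $\tfrac{3p}{p+1}<q\le \tfrac p2$ (possible precisely because $p>5$), then apply Theorem \ref{t.L021401} to get a contraction factor $Ct$ and conclude for small $t$. The only difference is presentational: the paper delegates the final limit-passage, uniqueness, and energy-bound details to Lemma 5.5 of \cite{KXZ}, whereas you sketch them directly, which matches what that reference does.
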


We postpone the proof of Lemma \ref{l.w02053} and \ref{l.w02054} to Section \ref{sec:4.3}.
Assuming Lemma \ref{l.w02054} holds, we are ready to prove Theorem \ref{t.02051}.

\begin{proof}[Proof of Theorem \ref{t.02051}]
Consider the system \eqref{e.Q02101}. Thanks to Lemma \ref{l.w02054} and by induction we know that for each $n\in\mathbb N$ there exists an unique solution $U^{(n)} \in L^P(\Omega; C([0,T], L^p))$ to system \eqref{e.Q02101} with $T>0$ sufficiently small, and satisfies
\begin{equation*}
\mathbb{E}\left[\sup _{0 \leq t \leq T}\left\|U^{(n)}(t, \cdot)\right\|_{p}^{p}+\sum_{j=1}^4 \int_{0}^{T} \int_{\mathbb{T}^{3}}\left|\nabla\left(\left|U_{j}^{(n)}(t, x)\right|^{p / 2}\right)\right|^{2} d x d t\right] \leq C \mathbb{E}\left[\left\|U_{0}\right\|_{p}^{p}\right]+C_{T}.
\end{equation*}

Next, we consider the difference  $V^{(n)}=U^{(n+1)}-U^{(n)}$. For each $n\in\mathbb N$ denote by 
\begin{align*}
   \varphi^{(n)} = \varphi\left(\|U^{(n)}\|_p\right).
\end{align*}
Thanks to the linearity of $G$ and the transport noise, we have
\begin{equation}\label{e.Q02241}
\begin{split}
d V^{(n)}-\Delta V^{(n)} d t
&=
\varphi^{(n+1)} \varphi^{(n)}  B(U^{(n)})  d t 
-
\varphi^{(n)} \varphi^{(n-1)}  B(U^{(n-1)})  d t + G(V^{(n)}) dt
\\&\quad+
(\varphi^{(n+1)} \varphi^{(n)} \sigma(U^{(n)}) 
-
\varphi^{(n)} \varphi^{(n-1)} \sigma(U^{(n-1)})) d \mathbb{W}_{t}
+
\mathbf{P}(b\cdot\nabla V^{(n)}) d\mathbb{W}_{t}, 
\\
\nabla\cdot v^{(n)} &=0,
\\
V^{(n)}(0) &= 0 \quad a.s..
\end{split}
\end{equation}
The first equation can be rewritten as 
\begin{align*}
    d V^{(n)}-\Delta V^{(n)} d t
&=
\sum_{i=1}^3 \partial_i f_i  d t + G(V^{(n)}) dt
+
g d \mathbb{W}_{t}
+
\mathbf{P}(b\cdot\nabla V^{(n)})d\mathbb{W}_{t},
\end{align*}
where 
\begin{equation}
\begin{split}
f_{i}= & -\varphi^{(n+1)} \varphi^{(n)}\left(\mathbf{P}\left(u_{i}^{(n)} U^{(n)}\right)\right)+\varphi^{(n)} \varphi^{(n-1)}\left(\mathbf{P}\left(u_{i}^{(n-1)} U^{(n-1)}\right)\right) \\
= & -\varphi^{(n)}\left(\varphi^{(n+1)}-\varphi^{(n)}\right)\left(\mathbf{P}\left(u_{i}^{(n)} U^{(n)}\right)\right)-\varphi^{(n)}\left(\varphi^{(n)}-\varphi^{(n-1)}\right)\left(\mathbf{P}\left(u_{i}^{(n)} U^{(n)}\right)\right) \\
& -\varphi^{(n)} \varphi^{(n-1)}\left(\mathbf{P}\left(v_{i}^{(n-1)} U^{(n)}\right)\right)-\varphi^{(n)} \varphi^{(n-1)}\left(\mathbf{P}\left(u_{i}^{(n-1)} V^{(n-1)}\right)\right) \\
= & f_{i}^{(1)}+f_{i}^{(2)}+f_{i}^{(3)}+f_{i}^{(4)},
\end{split}
\end{equation}
and
\begin{equation}
\begin{split}
g= & \varphi^{(n+1)} \varphi^{(n)} \sigma\left(U^{(n)}\right)-\varphi^{(n)} \varphi^{(n-1)} \sigma\left(U^{(n-1)}\right) \\
= & \varphi^{(n)}\left(\varphi^{(n+1)}-\varphi^{(n)}\right) \sigma\left(U^{(n)}\right)+\varphi^{(n)}\left(\varphi^{(n)}-\varphi^{(n-1)}\right) \sigma\left(U^{(n)}\right) \\
& +\varphi^{(n)} \varphi^{(n-1)}\left(\sigma\left(U^{(n)}\right)-\sigma\left(U^{(n-1)}\right)\right) \\
= & g^{(1)}+g^{(2)}+g^{(3)} .
\end{split}
\end{equation}
As $\mathbf{P}$ is a bounded operator, we can now use the estimates of (5.19) and (5.20) in \cite{KXZ} to get
\begin{equation}\label{e.Q02243}
    \begin{split}
        &\mathbb E\left[ \int_0^t \|f\|_q^p ds \right] \leq C t \mathbb E\left[ \sup\limits_{s\in[0,t]} \|V^{(n-1)}\|_p^p \right] + C t \mathbb E\left[ \sup\limits_{s\in[0,t]} \|V^{(n)}\|_p^p \right] ,
    \\
    &\mathbb E\left[ \int_0^t \|g\|_{\mathbb L^p}^p ds \right] \leq C t \mathbb E\left[ \sup\limits_{s\in[0,t]} \|V^{(n-1)}\|_p^p \right] + C t \mathbb E\left[ \sup\limits_{s\in[0,t]} \|V^{(n)}\|_p^p \right].
    \end{split}
\end{equation}
Thanks to Theorem \ref{t.L021401} and the fact that $V^{(n)}(0) = 0$, we know 
\begin{align}\label{e.Q02244}
    \mathbb{E}\left[\sup _{s\in[0,t]}\|V^{(n)}(s, \cdot)\|_{p}^{p}\right] 
   \leq  C t \mathbb E\left[ \sup\limits_{s\in[0,t]} \|V^{(n-1)}\|_p^p \right] + C t \mathbb E\left[ \sup\limits_{s\in[0,t]} \|V^{(n)}\|_p^p \right].
\end{align}
In particular, by choosing $t=t^*$ small enough we obtain exponential convergence rate, and there exists a fixed point $U \in L^p(\Omega; C([0,t^*],L^p))$ of system \eqref{e.Q02101}. As $U^{(n)}$ is the solution to system \eqref{e.Q02101} for each $n$, we have
\begin{align*}
    \left(U^{(n)}(s), \phi\right)= & \left(U_0, \phi\right)+\int_0^s\left(U^{(n)}(r), A \phi\right) d r  + \int_0^s\left( G(U^{(n)}(r)),\phi\right) d r\\
& +\sum_{i=1}^3 \int_0^s\left(\varphi^{(n)} \varphi^{(n-1)} \mathbf{P}\left(u_i^{(n-1)} U^{(n-1)}\right), \partial_i \phi\right) d r \\
& +\int_0^s\left(\varphi^{(n)} \varphi^{(n-1)} \sigma\left(U^{(n-1)}\right), \phi\right) d \mathbb{W}_r - \sum_{i=1}^3 \int_0^s\left(\mathbf{P}\left( b_i U^{(n)}\right), \partial_i\phi\right) d \mathbb{W}_r,
\end{align*}
for a.a. $(s,\omega)\in (0,t^*)\times \Omega$ and all $\phi \in C^{\infty}\left(\mathbb{T}^3;\mathbb R^4\right)$. By the exponential convergence rate and thanks to Lemma 5.2 and Remark 5.3 in \cite{KXZ}, one has 
\[
\|U^{(n)}\|_p \to \|U\|_p \quad \text{ for a.a. } (s,\omega)\in (0,t^*)\times \Omega.
\]
Therefore, 
$
\left(U^{(n)}(s), \phi\right) \to \left(U(s), \phi\right), 
$
$\varphi^{(n)}$ and $\varphi^{(n-1)} \to \varphi(\|U(s)\|_p):=\varphi$ for a.a. $(s,\omega)\in (0,t^*)\times \Omega$. Thanks to the dominant convergence theorem, one obtains
\begin{align*}
    &\int_0^s\left(U^{(n)}(r), A \phi\right) d r +  \int_0^s\left( G(U^{(n)}(r)),\phi\right) d r
    +\sum_{i=1}^3 \int_0^s\left( \varphi^{(n)} \varphi^{(n-1)}\mathbf{P}\left(u_i^{(n-1)} U^{(n-1)}\right), \partial_i \phi\right) d r 
    \\
    &\hspace{1cm}\to \int_0^s\left(U(r), A \phi\right) d r +  \int_0^s\left( G(U(r)),\phi\right) d r
    +\sum_{i=1}^3 \int_0^s\left(\varphi^2\mathbf{P}\left(u_i U\right), \partial_i \phi\right) d r
\end{align*}
for a.s. $(s,\omega)\in (0,t^*)\times \Omega$. Next, by BDG inequality and thanks to the property of $\sigma$, we have
\begin{align*}
&\mathbb{E}\left[\sup _{s \in\left[0, t^*\right)}\left|\int_0^s\left(\varphi^{(n)} \varphi^{(n-1)} \sigma\left(U^{(n-1)}\right)-\varphi^2 \sigma(U), \phi\right) d \mathbb{W}_r\right|\right] \\
\leq &\mathbb{E}\left[\left(\int_0^{t^*}\left\|\left(\varphi^{(n)} \varphi^{(n-1)} \sigma\left(U^{(n-1)}\right)-\varphi^2 \sigma(U), \phi\right)\right\|_{l^2}^2 d r\right)^{1 / 2}\right] \to 0 ,
\end{align*}
and
\begin{align*}
   &\mathbb{E}\left[\sup _{s \in\left[0, t^*\right)}\left|\int_0^s\sum_{i=1}^3  \left( \mathbf{P}\left( b_i U^{(n)}\right) - \mathbf{P}\left( b_i U\right), \partial_i\phi\right) d \mathbb{W}_r\right|\right] \\
\leq & C_{BDG} \sum_{i=1}^3\mathbb{E}\left[\left(\int_0^{t^*}\left\|\left( \mathbf{P}\left( b_i U^{(n)}\right) - \mathbf{P}\left( b_i U\right), \partial_i\phi\right)\right\|_{l^2}^2 d r\right)^{1 / 2}\right] \to 0 .
\end{align*}
Thanks to Lemma 5.2 and Remark 5.3 in \cite{KXZ}, one has
\begin{align*}
   &\int_0^s\left(\varphi^{(n)} \varphi^{(n-1)} \sigma\left(U^{(n-1)}\right) ,\phi\right)  d \mathbb{W}_r \to \int_0^s \left(\varphi^2 \sigma(U), \phi\right) d \mathbb{W}_r ,
   \\
   &\sum_{i=1}^3 \int_0^s\left( \mathbf{P}\left( b_i U^{(n)}\right), \partial_i\phi\right) d \mathbb{W}_r \to \sum_{i=1}^3 \int_0^s\left( \mathbf{P}\left( b_i U\right), \partial_i\phi\right) d \mathbb{W}_r.
\end{align*}
Combining the discussion above, by taking $n\to \infty$ we get
\begin{align*}
    \left(U(s), \phi\right)= & \left(U_0, \phi\right)+\int_0^s\left(U(r), A \phi\right) d r  + \int_0^s\left( G(U(r)),\phi\right) d r +\sum_{i=1}^3 \int_0^s\left(\varphi^2 \mathbf{P}\left(u_i U\right), \partial_i \phi\right) d r \\
& +\int_0^s\left(\varphi^2\sigma\left(U\right), \phi\right) d \mathbb{W}_r - \sum_{i=1}^3 \int_0^s\left( \mathbf{P}\left( b_i U\right), \partial_i\phi\right) d \mathbb{W}_r.
\end{align*}
Therefore we obtain the existence of a solution $U \in L^p(\Omega; C([0,t^*],L^p))$ to system \eqref{e.Q02181}.

Next, for the uniqueness we consider two pathwise solutions $U,V \in L^p (\Omega; C([0,t^*], L^p))$ to system \eqref{e.Q02181}. By denoting $W=U-V$ we have
\begin{align}\label{e.Q02242}
& d W-\Delta W d t=\left(\varphi_U^2 B(U)-\varphi_V^2 B(V) + G(W)\right) d t+\left(\varphi_U^2 \sigma(U)-\varphi_V^2 \sigma(V) + \mathbf{P}(b\cdot \nabla W)\right) d \mathbb{W}_t, \nonumber\\
& \nabla \cdot (W_1,W_2,W_3)=0, \nonumber\\
& W(0)=0, \quad \text { a.s.. }
\end{align}
Here $\varphi_U = \varphi(\|U\|_p)$ and $\varphi_V = \varphi(\|V\|_p)$. Similar as the treatment for \eqref{e.Q02241}, we can rewrite the first equation of \eqref{e.Q02242} as 
\begin{align*}
    dW-\Delta W d t
&=
\sum_{i=1}^3 \partial_i f_i  d t + G(W) dt
+
g d \mathbb{W}_{t}
+
\mathbf{P}(b\cdot\nabla W)d\mathbb{W}_{t},
\end{align*}
where
\begin{align*}
f_{i}= & -\varphi_U^2\left(\mathbf{P}\left(u_i U\right)\right)+\varphi_V^2\left(\mathbf{P}\left(v_i V\right)\right) \\
= & -\varphi_U\left(\varphi_U-\varphi_V\right)\left(\mathbf{P}\left(u_i U\right)\right)-\varphi_U \varphi_V\left(\mathbf{P}\left(w_i U\right)\right) \\
& -\varphi_U \varphi_V\left(\mathbf{P}\left(v_i W\right)\right)-\varphi_V\left(\varphi_U-\varphi_V\right)\left(\mathbf{P}\left(v_i V\right)\right)
\end{align*}
and
\begin{align*}
g & =\varphi_U^2 \sigma(U)-\varphi_V^2 \sigma(V) \\
& =\varphi_U\left(\varphi_U-\varphi_V\right) \sigma(U)+\varphi_V\left(\varphi_U-\varphi_V\right) \sigma(V)+\varphi_U \varphi_V\left(\sigma(U)-\sigma(V)\right) .
\end{align*}
Similar to \eqref{e.Q02243} we obtain
\begin{equation}\label{e.Q02245}
    \begin{split}
        \mathbb E\left[ \int_0^{t^*} \|f\|_q^p ds \right] \leq C t^* \mathbb E\left[ \sup\limits_{s\in[0,t^*]} \|W\|_p^p \right]  , \quad
\mathbb E\left[ \int_0^{t^*} \|g\|_{\mathbb L^p}^p ds \right] \leq C t^* \mathbb E\left[ \sup\limits_{s\in[0,t^*]} \|W\|_p^p \right].
    \end{split}
\end{equation}
Therefore thanks to Theorem \ref{t.L021401} we obtain
\begin{align}\label{e.Q02246}
    \mathbb E\left[ \sup\limits_{s\in[0,t^*]} \|W\|_p^p\right] \leq Ct^* \mathbb E\left[ \sup\limits_{s\in[0,t^*]} \|W\|_p^p\right].
\end{align}
When $t^*$ is small enough we obtain that $W=0$. Thus the solution is unique.

Finally, note that the constant $C$ appearing in \eqref{e.Q02244} and \eqref{e.Q02245} does not depend on the initial data, and thus $t^*$ is independent of the initial data. For arbitrary $T>0$, one can consider $N$ large enough such that $\frac TN < t^*$. Then by establishing the existence and uniqueness of pathwise solution inductively on $[\frac{i}{N}T,\frac{i+1}{N}T]$ for $i\in\{0,1,2,...N-1\}$, we obtain a unique pathwise solution on $[0,T]$. As $T$ is arbitrary, the solution exists globally in time.
\end{proof}

\subsection{Proof of Theorem \ref{t.w10101}}
Now by applying a family of suitable stopping times we are able to prove the existence and uniqueness of maximal pathwise solution to system \eqref{e.L10261}, which gives Theorem \ref{t.w10101}.

\begin{proof}[Proof of Theorem \ref{t.w10101}]
    For $n\in\mathbb N$, denote by $U^{(n)}$ the solution of the truncated system \eqref{e.Q02181} with $\delta_0=n$. Also, introduce the corresponding stopping times
$$
\tau_n(\omega)= \begin{cases}\inf \left\{t>0:\left\|U^{(n)}(t, \omega)\right\|_p \geq n / 2\right\}, & \text { if }\left\|U^{(n)}(0, \omega)\right\|_p<n / 2, \\ 0, & \text { if }\left\|U^{(n)}(0, \omega)\right\|_p \geq n / 2\end{cases}
$$
By uniqueness, the sequence is non-decreasing a.s. and $U^{(m)}=U^{(n)}$ on $\left[0, \tau_m \wedge \tau_n\right]$. Let $\tau=\lim _n \tau_n \wedge T$. Then, $\mathbb{P}(\tau>0)=1$. Also, for any integer $n \in \mathbb{N}$, define $U=U^{(n)}$ on $\left[0, \tau_n \wedge T\right]$. It is easy to check that $(U, \tau)$ satisfies all the required properties.
\end{proof}

\subsection{Proof of Lemma \ref{l.w02053} and \ref{l.w02054}}\label{sec:4.3}

\begin{proof}[Proof of Lemma \ref{l.w02053}]
Let $n=k$, and denote by
\[
\varphi^{(n-1)} = \varphi\left(\|U^{(n-1)}\|_p\right),\quad \varphi_V = \varphi\left(\|V\|_p\right),
\]
then we can rewrite the first equation in \eqref{e.Q02105} as
\begin{align}\label{e.Q02106}
d U^{(n)}-\Delta U^{(n)} d t  =\varphi^{(n-1)} \varphi_V B(V) d t  +G(U^{(n)}) dt 
+  \varphi^{(n-1)} \varphi_V \sigma(V) d \mathbb{W}_t + \mathbf{P} (b\cdot\nabla U^{(n)})d \mathbb{W}_{t}.
\end{align}

For the nonlinear term, we rewrite 
\[
\varphi^{(n-1)} \varphi_V B(V) dt = -\varphi^{(n-1)} \varphi_V \mathbf{P}(v\cdot \nabla V) dt  =-\sum_{i=1}^3\varphi^{(n-1)} \varphi_V \partial_i\left(\mathbf{P}\left(V_i V\right)\right) d t .
\]
In order to apply Theorem \ref{t.L021401} we consider $p,q,r,l$ satisfying
\begin{equation}\label{e.Q02111}
    \frac{3p}{p+1} <q \leq p,\quad \frac{1}{r}+\frac{1}{l}=\frac{1}{q}, \text{ and } l\leq p,\quad r\leq p .
\end{equation}
Thanks to \eqref{e.Q02104} and the boundedness of $\mathbf{P}$, we have
\begin{align*}
     &C \mathbb{E} \int_0^T \|\varphi^{(n-1)} \varphi_V B(V)\|_{-1,q}^p dt \leq C  \sum_{i=1}^3\mathbb{E}  \left[\int_0^T\left\|\varphi^{(n-1)} \varphi_V V_i V\right\|_q^p d s\right] 
     \\
     \leq &C \mathbb{E}\left[\int_0^T \varphi^{(n-1)} \varphi_V\left\|V\right\|_r^p\|V\|_l^p d s\right]  \leq C \mathbb{E}\left[\int_0^T \varphi^{(n-1)} \varphi_V\left\|V\right\|_p^{2p} d s\right] \leq C_{T}.
\end{align*}
As $V$ satisfies \eqref{e.Q02104}, we have $\varphi^{(n-1)} \varphi_VB(V)\in L^p(\Omega\times [0,T], W^{-1,p})$. Here from \eqref{e.Q02111} we know $\frac1r+\frac1l\geq \frac2p$, and thus $\frac2p<\frac{p+1}{3p}$, which gives $p>5$.

Next, by the sub-linear growth of $\sigma(V)$ and the property of $V$, one can easily verify that $\varphi^{(n-1)} \varphi_V \sigma(V)\in L^p(\Omega\times [0,T], \mathbb{L}^p)$. Thus by applying Theorem \ref{t.L021401} we know unique solution $U^{(k)}\in L^p(\Omega;C([0,T],L^p))$ and the bound \eqref{e.q02113} holds.
\end{proof}

\begin{proof}[Proof of Lemma \ref{l.w02054}]
For $U^{(n+1)} = (u^{(n+1)}, \rho^{(n+1)})$ and $U^{(n)} = (u^{(n)}, \rho^{(n)})$ solving system \eqref{e.Q02105}, we consider the difference $Z^{(n)}=U^{(n+1)}-U^{(n)}$, where $Z^{(n)}=(z^{(n)},\kappa^{(n)})$, $z^{(n)}=u^{(n+1)}-u^{(n)}$, $\kappa^{(n)}=\rho^{(n+1)}-\rho^{(n)}$. By direct calculation one gets
\begin{align*}
    d Z^{(n)}-\Delta Z^{(n)} d t=&\sum_{i=1}^{3} (\varphi^{(n)}-\varphi^{(n-1)})\varphi_V \partial_{i} (\mathbf{P}(V_i V)) d t + G(Z^{(n)})  dt
    \\
    & + (\varphi^{(n)}-\varphi^{(n-1)}) \varphi_V \sigma(V) d\mathbb{W}_{t} + \mathbf{P}( b\cdot\nabla Z^{(n)}) d\mathbb{W}_{t},
\end{align*}
where $\nabla\cdot z^{(n)}=0$ and $Z^{(n)}(0) = 0.$
Thanks to the Lipschitz property of $\varphi$, we have
\[
|\varphi^{(n)}-\varphi^{(n-1)}| \leq \frac{C}{\delta_0}\left| \|U^{(n)}\|_p - \|U^{(n-1)}\|_p \right|\leq \frac{C}{\delta_0}\|U^{(n)}-U^{(n-1)}\|_p = \frac{C}{\delta_0} \|Z^{(n-1)}\|_p.
\]
Then by the boundedness of $\mathbf{P}$ we can estimate
\begin{align*}
    \mathbb E\left[\int_0^t \|\sum_{i=1}^{3}  (\varphi^{(n)}-\varphi^{(n-1)})\varphi_V \mathbf{P}(V_i V) \|_{q}^p ds \right] \leq \frac{C}{\delta_0^p}  \mathbb E\left[\int_0^t \|Z^{(n-1)}(s)\|_{p}^p ds \right] \leq Ct \mathbb E\left[\sup\limits_{s\in[0,t]}\|Z^{(n-1)}(s)\|_{p}^p \right],
\end{align*}
and thanks to the property of $\sigma$ and $V$,
\begin{align*}
    \mathbb E \left[\int_0^t \|(\varphi^{(n)}-\varphi^{(n-1)}) \varphi_V \sigma(V) \|_{\mathbb L^p}^p ds \right] \leq C  \mathbb E\left[\int_0^t \|Z^{(n-1)}(s)\|_{p}^p ds \right] \leq Ct \mathbb E\left[\sup\limits_{s\in[0,t]}\|Z^{(n-1)}(s)\|_{p}^p \right].
\end{align*}
By applying Theorem \ref{t.L021401} we conclude that
\begin{align*}
    &\mathbb{E}\left[\sup _{0 \leq s \leq t}\|Z^{(n)}(t, \cdot)\|_{p}^{p} \right] \leq   Ct \mathbb E\left[\sup\limits_{s\in[0,t]}\|Z^{(n-1)}(s)\|_{p}^p \right].
\end{align*}
By taking $t\in(0,T]$ small enough one can follow the proof of \cite{KXZ}~Lemma 5.5 to conclude the existence of unique solution $U \in L^{p}\left(\Omega ; C\left([0, t], L^{p}\right)\right)$ and satisfies \eqref{e.Q02191}.

\end{proof}
\section{Global existence with small initial data and small noise}\label{sec:global}
To prove the global existence part in the main Theorem \ref{t.w10101}, we first recall the truncated stochastic Boussinesq system:
\begin{equation}\label{e.w02231}
\begin{split}
d U-\Delta U d t
&=\varphi\left(\left\|U\right\|_{p}\right)^2 B(U) dt 
+G(U) d t +\left(\mathbf{P}(b\cdot\nabla U) + \varphi\left(\left\|U\right\|_{p}\right)^2\sigma(U)\right) d\mathbb{W}_{t} 
 \\&\nabla \cdot u=0, 
 \\& U(0)=U_{0} ,
\end{split}
\end{equation}
on $[0, \infty) \times \mathbb{T}^{3}$ with $\nabla \cdot u_{0}=0$ and $\int_{\mathbb{T}^{3}} U_{0} d x=0$ a.s., where $\varphi$ is defined as in Section \ref{s.w02271}.
Then it has been shown in Section \ref{s.w02271} that system \eqref{e.w02231} is globally well-posed. Observe that when $\|U\|_{p}\leq \delta_{0} / 2$, the original system \eqref{e.L10261} coincides with this truncated model. Hence, an estimate of the likelihood that $\|U\|_{p}$ exceeds $\delta_{0} / 2$ determines the time of existence for the solution to \eqref{e.L10261}. The global existence part in Theorem \ref{t.w10101} then follows from Markov inequality and the following theorem.
\begin{theorem}\label{t.w02231}
Let $p>5$. Then the global solution $U \in L^{p}\left(\Omega ; C\left([0, \infty), L^{p}\right)\right)$ to \eqref{e.w02231} satisfies
\begin{equation}\label{L.022502}
\mathbb{E}\left[\sup _{s \in[0, \infty)} e^{a s}\|U(s)\|_{p}^{p}+\int_{0}^{\infty} e^{a s} \sum_{i}\left\|\nabla\left(\left|U_{i}(s)\right|^{p / 2}\right)\right\|_{2}^{2} d s\right] \leq C \mathbb{E}\left[\left\|U_{0}\right\|_{p}^{p}\right],
\end{equation}
provided that $a, \delta_{0}, \epsilon_{0}>0$ and $N_{b,2}$ are sufficiently small constants. 
\end{theorem}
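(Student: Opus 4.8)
The plan is to apply Itô's formula to $s\mapsto e^{as}\|U_j(s)\|_p^p$ for each component $j\in\{1,2,3,4\}$. This reproduces exactly the identity \eqref{e.L021208} with the weight $e^{as}$ inserted and the forcing and noise replaced by $\varphi(\|U\|_p)^2 B(U)$, $G(U)$, $\varphi(\|U\|_p)^2\sigma(U)$ and $\mathbf{P}(b\cdot\nabla U)$, plus the extra drift $a\int_0^t e^{as}\|U_j(s)\|_p^p\,ds$ from differentiating the exponential. Since $e^{as}$ is deterministic and nondecreasing, every estimate in the proof of Theorem \ref{t.L021401}, including the Young and BDG steps, survives verbatim after inserting the weight. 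The whole argument then reduces to showing that each resulting term is absorbed either by the dissipation $\sum_j\|\nabla(|U_j|^{p/2})\|_2^2$ or by the initial data once $a,\delta_0,\epsilon_0,N_{b,2}$ are small. The key mechanism is the Poincaré-type inequality $\|U_j\|_p^p=\||U_j|^{p/2}\|_2^2\le C_P\|\nabla(|U_j|^{p/2})\|_2^2$ for zero-mean $U_j$, valid as in \eqref{e.L031101}, which converts the harmful exponential drift $a\,e^{as}\|U_j\|_p^p$ into a small multiple of the good dissipation whenever $aC_P$ is small.

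I would next dispose of the genuinely nonlinear contributions using the smallness parameters. For the convection I treat $\varphi(\|U\|_p)^2 B(U)$ as a $W^{-1,q}$ forcing exactly as in \eqref{e.L030402}--\eqref{e.L021404}; the resulting forcing term is $C_\delta\varphi^{2p}\|B(U)\|_{-1,q}^p\le C_\delta\varphi^{2p}\|U\|_{2q}^{2p}$, using $\|\mathbf{P}(u\cdot\nabla U)\|_{-1,q}\le C\|u\otimes U\|_q\le C\|U\|_{2q}^2$. Choosing $q\le p/2$, which is compatible with the constraint $3p/(p+1)<q$ precisely because $p>5$, the embedding $L^p\hookrightarrow L^{2q}$ gives $\|U\|_{2q}^{2p}\le C\|U\|_p^{2p}$, and on the support of $\varphi$ one has $\|U\|_p\le\delta_0$, so $\varphi^{2p}\|U\|_{2q}^{2p}\le C\delta_0^p\|U\|_p^p$. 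Fixing the Young parameter $\delta$ first and only then taking $\delta_0$ small makes this a small multiple of $\|U\|_p^p$, hence absorbable through Poincaré. The multiplicative noise is handled by the superlinearity hypothesis \eqref{e.L022205}, which bounds the Itô correction of $\varphi^2\sigma(U)$ by $C\epsilon_0^2\,e^{as}\|U\|_p^p$ and its martingale part by $\tfrac14\mathbb{E}\sup_s e^{as}\|U_j\|_p^p$ plus a small multiple of $e^{as}\|U\|_p^p$. The transport noise is estimated exactly as the terms $I_1,I_2$ and $J_1,J_3$ in Theorem \ref{t.L021401}: the contributions from $b\cdot\nabla U$ cost $CN_{b,0}$ times the dissipation, while the orthogonal piece $\mathbf{Q}(b\cdot\nabla U)$ contributes only $C(N_{b,1}^{1/2}+N_{b,2}^{1/2})\|U\|_p^p$ via \eqref{e.L022304} and \eqref{e.L0221011}. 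Because $N_{b,0}\le N_{b,1}\le N_{b,2}$, the single smallness assumption on $N_{b,2}$ makes all of these small.

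The one term that cannot be made small is the buoyancy $G(U)=\mathbf{P}(U_4e_3,0)$, and here I would exploit the triangular structure of the system. The temperature equation for $U_4$ carries no buoyancy forcing, and its advection term $u\cdot\nabla U_4=\nabla\cdot(uU_4)$ tested against $|U_4|^{p-2}U_4$ vanishes since $\nabla\cdot u=0$; hence the weighted energy estimate for $U_4$ closes by itself. Collecting the small terms above and absorbing $a\,e^{as}\|U_4\|_p^p$ by Poincaré yields $\frac{d}{ds}\mathbb{E}[e^{as}\|U_4\|_p^p]+\tfrac{\kappa}{2}\mathbb{E}[e^{as}\|\nabla(|U_4|^{p/2})\|_2^2]\le 0$, which integrates to bound both $\mathbb{E}\sup_s e^{as}\|U_4\|_p^p$ and $\mathbb{E}\int_0^\infty e^{as}\|\nabla(|U_4|^{p/2})\|_2^2\,ds$ by $C\mathbb{E}\|U_{0,4}\|_p^p$. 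For the velocity components $j\in\{1,2,3\}$ I then estimate the buoyancy forcing, as in \eqref{e.L021407}, by $\|(GU)_j\|_{-1,p}^p\le C\|U_4\|_p^p\le CC_P\|\nabla(|U_4|^{p/2})\|_2^2$, so that $\int_0^\infty e^{as}\|(GU)_j\|_{-1,p}^p\,ds$ is controlled by the already-bounded weighted dissipation of $U_4$ rather than by its non-integrable weighted energy. Running the same absorption for the velocity energy bounds $\mathbb{E}\sup_s e^{as}\|u\|_p^p$ and the corresponding dissipation, and adding the two estimates and letting $t\to\infty$ produces \eqref{L.022502}.

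I expect the main obstacle to be twofold. First, the bookkeeping that makes every bad coefficient simultaneously small in the correct order: the Young constant $\delta$ must be fixed before $\delta_0$, and only afterwards are $a,\epsilon_0,N_{b,2}$ chosen small relative to the now-fixed Poincaré and dissipation constants. Second, the decoupling of the buoyancy, which forces the temperature to be estimated first so that its weighted dissipation, which is integrable, can be used to close the velocity estimate; had one tried to handle the full system at once, the buoyancy would return a dissipation term for $U_4$ with an order-one constant that the velocity's dissipation budget cannot absorb. The $\sup$-in-time norm inside the expectation is, as usual, recovered from the BDG bounds on the stochastic integrals, which reproduce the $\tfrac34\mathbb{E}\sup_s e^{as}\|U_j\|_p^p$ contribution absorbed into the left-hand side.
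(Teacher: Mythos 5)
Your proposal is correct and follows essentially the same route as the paper's own proof: the weighted It\^o formula, absorption of the truncated convection via the smallness $\varphi\|U\|_p\leq\delta_0$, the superlinearity hypothesis for the multiplicative noise, the Section \ref{s.L022301} decomposition for the transport noise, the Poincar\'e inequality to tame the exponential drift, and the triangular structure in which the temperature estimate closes by itself (since $(G(U))_4\equiv 0$) and its weighted dissipation, via \eqref{e.L022308}, controls the buoyancy forcing in the velocity components. The only cosmetic differences are that you feed the convection into Theorem \ref{t.L021401} as a $W^{-1,q}$ forcing with $q\leq p/2$ (the paper instead redoes the integration-by-parts energy estimate directly in \eqref{e.L022301}--\eqref{e.L030405}) and you invoke the exact cancellation of the temperature advection, which the paper does not need.
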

We first supply a proof of \eqref{L.022501} in Theorem \ref{t.w10101} by assuming Theorem \ref{t.w02231}.
\begin{proof}[Proof of global existence in Theorem \ref{t.w10101}]
    Let $\mathbb{E}\left\|U_0\right\|_p^p<\delta$. It then follows from Markov inequality and \eqref{L.022502} that 
    \[\mathbb{P}\left(\sup_{s \in[0, \infty)} e^{a s}\|U(s)\|_p^p\geq \frac{\delta_0}{2}\right)\leq \frac{C\delta}{\delta_0}.\]
    Since when $\|U\|_{p}\leq \delta_{0} / 2$ the original system \eqref{e.L10261} coincides with this truncated model, we have 
\begin{align*}
      \mathbb{P}(\tau=\infty) &= \mathbb{P}\left(\sup_{s \in[0, \infty)} \|U(s)\|_p^p\leq \frac{\delta_0}{2}\right)\\
       &\geq \mathbb{P}\left(\sup_{s \in[0, \infty)} e^{a s}\|U(s)\|_p^p\leq \frac{\delta_0}{2}\right)\geq 1- \frac{C\delta}{\delta_0}.
\end{align*}    
    By choosing $\delta$ sufficiently small, we obtain \eqref{L.022501} as desired. 
\end{proof}
\begin{proof}[Proof of Theorem \ref{t.w02231}]
 Let $T>0$. Applying the Itô-Wentzel formula to $F_{i}(t)=e^{a t}\left\|U_{i}(t)\right\|_{p}^{p}$, for a fixed $i \in\{1,2,3,4\}$, we obtain
\begin{equation}
d\left(e^{a t}\left\|U_{i}(t)\right\|_{p}^{p}\right)=a e^{a t}\left\|U_{i}(t)\right\|_{p}^{p} d t+e^{a t} d\left(\left\|U_{i}(t)\right\|_{p}^{p}\right).
\end{equation}
Similar to the Ito expansion as in Section \ref{s.L022301}, we have 
\begin{align}\label{e.w02232}
\begin{split}
    &e^{at}\|U_j(t)\|_p^p + \frac{4(p-1)}{p}\int_0^t e^{as} \left\|\nabla\left(\left|U_j(s)\right|^{p / 2}\right)\right\|_2^2ds\\ 
    &= \|U_{0, j}\|_p^p + p\int_0^t e^{as}\varphi^2 \int_{\mathbb{T}^3}|U_j(s)|^{p-2}U_j(s) B(U)_jdxds+p\int_0^t e^{as}   
    \int_{\mathbb{T}^3} |U_j(s)|^{p-2}U_j(s) G(U)_jdxds\\
    &+ p\int_0^t e^{as}\varphi^2 \int_{\mathbb{T}^3}|U_j(s)|^{p-2}U_j(s)\left((\mathbf{P}(b\cdot\nabla U))_j + 
    \sigma(U)_j \right)dxd\mathbb{W}_s \\
    &+\frac{p(p-1)}{2}\int_0^t e^{as}\varphi^4 \int_{\mathbb{T}^3}|U_j|^{p-2}\|\big(\mathbf{P}\left(b\cdot\nabla U \right)
    \big)_j+ \sigma(U)_j \|_{l^2}^2dxds+a \int_{0}^{t} e^{a s}\left\|U_{i}(s)\right\|_{p}^{p} d s.
\end{split}
\end{align}
Choosing $\bar{r}, r, q, l$ as in \eqref{e.L030403} and \eqref{e.Q02111}, and using integration by parts, one obtains
\begin{align}\label{e.L022301}
\begin{aligned}
&\hspace{0.5cm}p e^{a s} \varphi^{2}\left|\int_{\mathbb{T}^{3}} \left|U_{j}\right|^{p-2} U_{j}B(U)_{j} d x\right| \\
&= p e^{a s} \varphi^{2}\left|\sum_{i=1}^3 \int_{\mathbb{T}^{3}} \partial_{i}\left(\left|U_{j}\right|^{p-2} U_{j}\right)\left(\mathbf{P}\left(U_{i} U\right)\right)_{j} d x\right|\\
&\leq Ce^{a s} \varphi^{2}\left\|\nabla\left(\left|U_j(s)\right|^{p / 2}\right)\right\|_2\left\||U_j|^{p/2-1}\right\|_{\bar{r}}\|U_i\|_r\|U\|_{l}
\\
&\leq C\delta_0 e^{a s} \varphi^{2}\left\|\nabla\left(\left|U_j(s)\right|^{p / 2}\right)\right\|_2\left\||U_j|^{p/2-1}\right\|_{\bar{r}}\|U\|_p,
\end{aligned}
\end{align}
where we used the facts that $r, l\leq p$ and $\varphi\|U\|_p\leq \delta_0$. Now estimates as in \eqref{e.L030402}-\eqref{e.L021404} give
\begin{align}\label{e.L030405}
\begin{aligned}
\left|\int_0^tp e^{a s} \varphi^{2}\int_{\mathbb{T}^{3}} \left|U_{j}\right|^{p-2} U_{j}B(U)_{j} d xds \right|\leq\delta \int_0^t e^{a s} \left\|\nabla\left(\left|U_j(s)\right|^{p / 2}\right)\right\|_2^2 d s+C_\delta \delta_0^\kappa \int_0^t e^{a s}\|U(s)\|_p^p d s,
\end{aligned}
\end{align}
where $\delta>0$ is arbitrary, $\kappa>0$ is a constant depending on $p$ . Assumption \eqref{e.L022205} and estimate as in \eqref{e.L022107} and  yield 
\begin{equation}\label{e.w02272}
\begin{split}
\mathbb{E}\left[\sup _{t \in[0, T]}\left|\int_0^t e^{a s} \varphi^2 \int_{\mathbb{T}^3} |U_j(s)|^{p-2} U_j(s)  \sigma(U)_j(s) d x d \mathbb{W}_s \right|\right] 
&\leq
\frac{1}{4p}\mathbb{E}\sup_{s\in[0, T]}e^{as}\|U_j(s)\|_{p}^p
\\&\quad+
C\epsilon_0 \mathbb{E}\left[\int_0^T e^{a s}\|U(s)\|_p^p d s\right] .
\end{split}
\end{equation}
Assumption \eqref{e.L022205} on the noise also implies
\begin{equation}\label{e.w02271}
\begin{split}
p(p-1)\int_0^t e^{as}\varphi^4 \int_{\mathbb{T}^3}|U_j|^{p-2}\|\sigma(U)_j \|_{l^2}^2dxds&\leq C \epsilon_0^2 \int_0^te^{a s} \|U(s)\|_p^pds.
\end{split}
\end{equation}
 Estimates as in \eqref{e.L022102}, \eqref{e.L121803} and \eqref{e.L022304} yield
\begin{align}\label{e.L022305}
\begin{split}
&p(p-1)\int_0^t e^{as}\varphi^4 \int_{\mathbb{T}^3}|U_j|^{p-2}\|\big(\mathbf{P}\left(b\cdot\nabla U \right)
    \big)_j\|_{l^2}^2dxds\\
    &\leq C\left(N_{b,0} \int_0^t e^{a s} \left\|\nabla\left(\left|U_j(t)\right|^{p / 2}\right)\right\|_2^2 d s+ N_{b,2}\int_0^te^{as}\left\|U(s)\right\|_p^p d s\right).
 \end{split}
\end{align}
Also estimates as in \eqref{e.L022105}, \eqref{e.L022107} and \eqref{e.L0221011} imply 
\begin{align}\label{e.L022307}
\begin{split}
&\mathbb{E}\left[\sup _{t \in[0, T]}\left|\int_0^t e^{a s} \varphi^2 \int_{\mathbb{T}^3} |U_j(s)|^{p-2} U_j(s)  \sigma(U)_j(s) d x d \mathbb{W}_s \right|\right] \\
&\leq\frac{1}{2p}\mathbb{E}\sup_{s\in[0, T]}e^{as}\|U_j(s)\|_{p}^p +  CN_{b,2} \mathbb{E}\left[\int_0^T e^{a s} \|U(s)\|_p^p d s\right] + C N_{b,0}\mathbb{E}\int_0^T e^{a s} \left\|\nabla\left(\left|U_j(t)\right|^{p / 2}\right)\right\|_2^2 d s.
 \end{split}
\end{align}
Combining \eqref{e.L030405}-\eqref{e.L022307}, one has 
\begin{align}\label{e.L022306}
\begin{split}
    &\mathbb{E}\left[\frac{1}{4}\sup_{s\in[0, T]}e^{as}\|U_j(s)\|_{p}^p + L_1\int_0^T e^{as} \left\|\nabla\left(\left|U_j(s)\right|^{p / 2}\right)\right\|_2^2ds\right]\\ 
    &\leq \mathbb{E}\left[\|U_{0, j}\|_p^p +L_2\int_0^T e^{a s} \|U(s)\|_p^pds +  p\left|\int_0^T e^{as}   
    \int_{\mathbb{T}^3} |U_j(s)|^{p-2}U_j(s) G(U)_jdxds\right|\right],
\end{split}
\end{align}
where 
\begin{align}\label{e.L0223010}
\begin{split}
&L_1 = L_1(\delta, b, p) = \frac{4(p-1)}{p} - \delta - C_pN_{b,0},\\
&L_2 = L_2(\delta, b, p, \delta_0, a) = C_\delta \delta_0^\kappa+C\epsilon_0^2 +C_pN_{b,2} +a.
\end{split}
\end{align}
For $j=1,2,3$, the fact $(GU)_j = \mathcal{P}(U_4 e_3))_j$ and the estimate like \eqref{e.L021404} give
\begin{align}\label{e.L022302}
\begin{split}
&p\left|\int_0^Te^{as}\int_{\mathbb{T}^3}|U_j(s)|^{p-2}U_j(s)(GU)_j(s)dxds\right|\\
&\leq \eta \int_0^Te^{as}\left\|\nabla\left(\left|U_j(r)\right|^{p / 2}\right)\right\|_2^2 d r+\eta \int_0^Te^{as}\|U_j(s)\|_p^pds+C_{\eta}\int_0^Te^{as}\left\|U_4(s)\right\|_{p}^p d s,
\end{split}
\end{align}
where $\eta>0$ is arbitrary.  Note that  $(G(U))_4\equiv0$. Hence inequality \eqref{e.L022306} for $j=4$ and the Poincaré   \eqref{e.L031101} imply that 
\begin{align}\label{e.L022308}
\begin{split}
    \mathbb{E}\int_0^T e^{as}\left\|U_4(s)\right\|_{p}^p d s
    &\leq 
    C_{P}\mathbb{E}\int_0^T e^{as} \left\|\nabla\left(\left|U_4(s)\right|^{p / 2}\right)\right\|_2^2ds
    \\&\leq 
    \mathbb{E}\left[\frac{C_{P}}{L_1}\|U_0\|_p^p +\frac{C_{P}L_2}{L_1}\int_0^T e^{a s} \|U(s)\|_p^pds\right].
\end{split}
\end{align}
Combining \eqref{e.L022306}-\eqref{e.L022308}, we arrive at 
\begin{align}\label{e.L022309}
\begin{split}
    \mathbb{E}\left[\frac{1}{4}\sup_{s\in[0, T]}e^{as}\|U(s)\|_{p}^p + L\sum_{j=1}^4\int_0^T e^{as} \left\|\nabla\left(\left|U_j(s)\right|^{p / 2}\right)\right\|_2^2ds\right]\leq C_{\eta, L_1}\mathbb{E}\|U_0\|_p^p ,
\end{split}
\end{align}
where $L = L_1-\eta  - C\left(L_2+ \eta +\frac{C(\eta)L_2}{L_1}\right)$.  In view of \eqref{e.L0223010}, we can first choose $\delta, N_{b,0}, \eta$ small enough, and then choose $\delta_0, \epsilon_0, N_{b, 2}, a$ in $L_2$ sufficiently small to make $L>0$.  We then deduce \eqref{L.022502} by letting $T\to\infty$. 
\end{proof}

\section{Acknowledgements}
WW was partially supported by an AMS-Simons travel grant.

\bibliographystyle{abbrv}
\bibliography{Local_Boussinesq}

\end{document}